\theoremstyle{plain}
\newtheorem{theorem}{Theorem}[section]
\newtheorem{prop}[theorem]{Proposition}
\theoremstyle{definition}
\newtheorem{definition}[theorem]{Definition}
\theoremstyle{remark}
\numberwithin{equation}{section}
\let\abs=\envert
\newcommand{\sgn}{\textrm{sgn}}
\newcommand{\Fnhat}{\hat{F_n}}
\newcommand{\That}{\hat T}
\newcommand{\Fhat}{\hat F}
\newcommand{\fhat}{\hat f}
\newcommand{\ghat}{\hat g}
\newcommand{\Ghat}{\hat G}
\newcommand{\phihat}{\hat \phi}
\newcommand{\Ltwop}{\Lany{2}}
\newcommand{\Lonep}{\Lany{1}}
\newcommand{\Lpp}{L{\!}'^{\,p}}
\newcommand{\Lnp}{L{\!}^{(n),p}}
\newcommand{\Inq}{I^{n,q}}
\newcommand{\Br}{{\mathcal B}_r}
\newcommand{\Bc}{{\mathcal B}_c}
\newcommand{\acn}{{\mathcal A}^n_c}
\newcommand{\arn}{{\mathcal A}^n_r}
\newcommand{\supp}{\textrm{supp}}
\newcommand{\alexc}{{\mathcal A}_c}
\newcommand{\balexr}{{\mathcal B}_r}
\newcommand{\balexc}{{\mathcal B}_c}
\newcommand{\Rbar}{\overline{\RR}}
\newcommand{\bv}{{\mathcal BV}}
\newcommand{\intinf}{\int^\infty_{-\infty}}
\newcommand{\NN}{{\mathbb N}}
\newcommand{\RR}{{\mathbb R}}
\newcommand{\Sc}{{\mathcal S}}
\newcommand{\fn}{\!:\!}
\DeclareMathOperator{\esssup}{ess\,sup}
\providecommand{\abs}[1]{\lvert#1\rvert}
\providecommand{\norm}[1]{\lVert#1\rVert}
\providecommand{\Lany}[1]{L{\!}'^{\,#1}}
\begin{document}
\date{Preprint August 17, 2012.  To appear in \textit{Mathematica Slovaca}.}


\title{The $L^p$ primitive integral}
\author[Erik Talvila]{Erik Talvila*}

\newcommand{\acr}{\newline\indent}

\address{\llap{*\,}Department of Mathematics \& Statistics\acr
                   University of the Fraser Valley\acr
                   Abbotsford, BC V2S 7M8\acr
                   CANADA}
\email{Erik.Talvila@ufv.ca}
\subjclass[2010]{Primary 46E30, 
46F10, 46G12;  Secondary 42A38, 42A85, 46B42, 46C05}
\keywords{Lebesgue space,
Banach space,
Schwartz distribution, generalised function,
primitive, integral, Fourier transform, convolution, Banach lattice, 
Hilbert space, Poisson integral}

\begin{abstract}
For each $1\leq p<\infty$ a space of integrable Schwartz distributions,
$L{\!}'^{\,p}$,
is defined by taking the distributional derivative of all functions
in $L^p$.  Here,  $L^p$ is with respect to 
Lebesgue measure on the real line.  If $f\in L{\!}'^{\,p}$ such that $f$ is
the distributional derivative of $F\in L^p$ then
the integral is defined as
$\int^\infty_{-\infty} fG=-\int^\infty_{-\infty} F(x)g(x)\,dx$, where $g\in L^q$, $G(x)=
\int_0^x g(t)\,dt$ and $1/p+1/q=1$.
A norm is $\lVert f\rVert'_p=\lVert F\rVert_p$.
The spaces $L{\!}'^{\,p}$ and $L^p$ are isometrically isomorphic.
Distributions in $L{\!}'^{\,p}$ share many properties with functions in $L^p$.
Hence, $L{\!}'^{\,p}$ is reflexive, its dual space is identified with $L^q$,
there is a type of H\"older inequality, continuity
in norm, convergence theorems, 
Gateaux derivative.  It is a Banach lattice and abstract $L$-space.
Convolutions and Fourier transforms are defined.
Convolution with the Poisson kernel is
well-defined and provides a solution to the half plane Dirichlet problem, 
boundary values being taken on in the new norm.
A product is defined that
makes $L{\!}'^{\,1}$
into a Banach algebra isometrically isomorphic to the 
convolution algebra
on $L^1$.
Spaces of higher order
derivatives of $L^p$ functions are defined.  These are also Banach spaces
isometrically isomorphic to $L^p$.
\end{abstract}

\maketitle

\section{Introduction}\label{sectionintroduction}
One way of defining an integral is through properties of its
primitive.  This is a function whose derivative is in some 
sense equal to the integrand.  For example, if $f\fn[a,b]\to\RR$
then $f\in L^1$ if and only if there is an absolutely continuous
function $F$ such that $F'(x)=f(x)$ for almost all $x\in(a,b)$.
This then provides a descriptive definition of the Lebesgue integral in
terms of the primitive $F$ and the fundamental theorem of calculus
formula $\int_a^bf(x)\,dx=F(b)-F(a)$.  
This same approach can be used to
define Henstock--Kurzweil and wide Denjoy integrals.  See \cite{celidze}
for the relevant spaces of primitives.  

In this paper we define
integrals of tempered distributions by taking $L^p$ as the space of
primitives for $1\leq p<\infty$.  Such functions need not have pointwise
derivatives so the distributional derivative is used.  The distributions
integrable in this sense are the weak derivative of $L^p$ functions but
have many properties similar to
$L^p$
functions.  This approach was
followed in \cite{talviladenjoy} with the continuous primitive integral.
The primitives were functions continuous on the extended real line.  The
space of distributions integrable in this sense is a Banach space under
the Alexiewicz norm, isometrically
isomorphic to the space of primitives with the uniform norm.
Primitives were taken to be regulated functions in 
\cite{talvilaregulated}. 
A function on the real line is
regulated if it has a left limit and a right limit at each point.  This
again led to a Banach space of distributions that was isometrically
isomorphic to the space of primitives with the uniform norm.  The
space of distributions that have a
continuous primitive integral is the completion of $L^1$ and the space
of Henstock--Kurzweil integrable functions in the Alexiewicz norm.
The regulated primitive integral provides the completion of the signed
Radon measures in the Alexiewicz norm.  In the current paper we again
define a Banach space of distributions, only now it is isometrically
isomorphic to an $L^p$ space.

The outline of the paper is as follows.

Section~\ref{sectionnotation} provides some notation for integrals
and distributions.  Then in 
Section~\ref{sectionLp} we define the space $\Lpp$ to be the
set of distributions that are the
distributional derivative of $L^p$ functions
on the real line.  This is our space of integrable distributions.
We define $I^q$ to be the absolutely continuous functions that are
the indefinite integral of functions in $L^q$.  See 
Definition~\ref{defnprimitive}.  Such functions are
multipliers and the integral is defined as $\intinf fG:=-\intinf
F(x)G'(x)\,dx$ where $f\in\Lpp$ such that $f=F'$ for $F\in L^p$
and $G(x)=\int_0^xg(t)\,dt$ for a function $g\in L^q$ (Definition~
\ref{defnintegral}).  Here,
$1\leq p<\infty$ and $1/p+1/q=1$.  We are therefore defining $\intinf fG$
in terms of the Lebesgue integral $\intinf F(x)G'(x)\,dx$
with respect to Lebesgue measure.  Primitives are unique
since there are no constant functions in $L^p$ (Theorem~\ref{theoremunique}).  
A norm on $\Lpp$ is then
$\norm{f}'_p=\norm{F}_p$.  This makes $\Lpp$ into a Banach space 
isometrically isomorphic to $L^p$.  Many properties of $L^p$ are
then inherited by $\Lpp$.  The remaining theorems and propositions in
this section show
that $\Lpp$ is separable,
the dual space is isometrically isomorphic to $L^q$,
the unit ball is uniformly convex, $\Lpp$ is reflexive, it is invariant
under translations, there is continuity in norm, the Schwartz space of
rapidly decreasing $C^\infty$ functions is dense, there are versions
of the H\"older and Hanner inequalities.  A Gateaux derivative is computed.
We prove a norm convergence
theorem and define an equivalent norm.  
Under pointwise operations $L^p$ is a Banach lattice.
If $f,g\in\Lpp$ with primitives $F,G\in L^p$, define
$f\preceq g$ whenever $F(x)\leq G(x)$ for almost all $x\in\RR$.
This makes $\Lpp$ into a Banach lattice that is lattice isomorphic to $L^p$.
It is then an abstract $L$-space in the sense of Kakutani.  Under this
ordering there is a version of the dominated convergence theorem.  At
the end of this section we show how to approximate the integral by a
sequence of derivatives of step functions.

In Section~\ref{sectionexamples} various examples are given.  
Proposition~\ref{propintcondition} gives an integral condition with a 
power growth weight that ensures a function is in
$\Lpp$.  Examples are given of functions or distributions in $\Lpp$
that are not in
$L^1_{loc}$, not in any $L^p$ space, are the difference of translations
of the Dirac distribution, have conditionally convergent integrals,
have principal value integrals,
have primitives whose pointwise derivative vanishes almost everywhere
or exists nowhere.

Convolutions are defined as $\ast\fn \Lpp\times I^q\to L^\infty$
for $p$ and $q$ conjugate.
Various properties are proved in Theorem~\ref{theoremconvinfty}.  In this
case there are many results similar to convolutions defined on
$L^p\times L^q$, such as uniform continuity.  
Convolutions are defined in Definition~\ref{defnconvp} as 
$\ast\fn \Lpp\times L^q\to \Lany{r}$ and 
$\ast\fn L^p\times \Lany{q} \to \Lany{r}$
using
a sequence in $L^q\cap I^q$ that converges to a given
function in $L^q$.  Here, $p,q,r\in[1,\infty)$ such that $1/p+1/q=1+1/r$.
Properties of the convolution that mirror properties
of convolutions in $L^p\times L^q$ are proved in Theorem~\ref{theoremconvLp}.
A different type of product is defined in Theorem~\ref{banachalgebra} that
makes $\Lany{1}$ into a Banach algebra isometrically isomorphic to the convolution algebra
on $L^1$.

Since $\Lonep$ is isometrically isomorphic to $L^1$, Fourier transforms
can be defined directly using the usual integral definition. If
$f\in\Lonep$ then its Fourier transform is given by the integral
$\fhat(s)=\intinf f(t)e^{-ist}\,dt=is\Fhat(s)$, where $F\in L^1$ is the
primitive of $f$.  
It is shown
that this agrees with the definition for tempered distributions.
Fourier transforms of distributions in $\Lonep$ are continuous functions
and the Riemann--Lebesgue lemma takes the form $\fhat(s)=o(s)$ as
$|s|\to\infty$.  Many of the usual properties of $L^1$ Fourier transforms
continue to hold in $\Lonep$.  See Theorem~\ref{theoremFourier}.

In Section~\ref{sectionL2} some special properties of $\Ltwop$ are
considered.
The space $\Ltwop$ is isometrically isomorphic to $L^2$ so it is a
Hilbert space.  The inner product is $(f,g)=(F,G)=\intinf F(x)G(x)\,dx$,
where $f,g\in \Ltwop$ with respective primitives $F,G\in L^2$.  The
Fourier transform is defined from the $L^2$ Fourier transform of the
primitive.  

In Section~\ref{sectionhigher} spaces of distributions are constructed
by taking the $n$th distributional derivative of $L^p$ functions.  Each
such space is then a separable Banach space, isometrically isomorphic
to $L^p$.  Most of the results for $\Lpp$ continue to hold in these
spaces.

In Section~\ref{sectionpoisson} it is shown how the half plane Poisson integral
can be defined for distributions that are the $n$th derivative of an
$L^p$ function.  There are direct analogues of the usual $L^p$ results,
such as boundary values being taken on in the $\norm{\cdot}'_p$ norm.

In Section~\ref{sectionRn} we sketch out how these integrals
can be defined in $\RR^n$.

\section{Notation}\label{sectionnotation}
All statements regarding measures will be with respect to Lebesgue
measure, denoted $\lambda$. 
For $1\leq p<\infty$,
the Lebesgue space on the real line is
$L^p$, which consists of the measurable functions $f\fn\RR\to\RR$
such that $\intinf |f(x)|^p\,dx<\infty$.  To distinguish between other
types of integrals introduced later, Lebesgue integrals will always
explicitly show the
integration variable and differential as above.
The $L^p$ spaces have norm
$\norm{f}_p=(\intinf\! |f(x)|^p\,dx)^{1/p}$.
And, $L^\infty$ is the set of bounded measurable functions with
norm $\norm{f}_\infty=\esssup_{x\in\RR}|f(x)|$.
For $1\leq p\leq\infty$ each $L^p$ is a Banach space.
If $1< p<\infty$ then its conjugate exponent is $q\in\RR$
such that
$p^{-1}+q^{-1}=1$.  For $p=1$, $q=\infty$.
The locally integrable functions are
$L^p_{loc}$ and a measurable function $f\in L^p_{loc}$ if $f\chi_{[a,b]}\in
L^p$ for each compact interval $[a,b]$.  The set of absolutely continuous 
functions on the real
line is denoted
$AC(\RR)$ and consists of the functions $F\fn\RR\to\RR$ such that $F$ is
absolutely continuous on each compact interval in $\RR$.  And,
$F\in AC(\RR)$ if and only if there is $f\in L^1_{loc}$ such that
$F(x)=F(0)+\int_0^xf(t)\,dt$.

The extended real line is $\Rbar=[-\infty,\infty]$ and $C(\Rbar)$
denotes the real-valued functions that are continuous at each
point of $\RR$ and have real limits at $-\infty$ and at $\infty$.
Define $AC(\Rbar)=AC(\RR)\cap\bv$ where $\bv$ are the functions of
bounded variation on the real line.  Then $f\in L^1$ if and only if there is
$F\in AC(\Rbar)$ such that $f(x)=F'(x)$ for almost all $x\in\RR$.

The Schwartz space, $\Sc$, of rapidly decreasing smooth functions, consists of 
the functions $\phi\in C^\infty
(\RR)$ such that for all integers $m,n\geq 0$ we have $x^m\phi^{(n)}(x)\to 0$
as $|x|\to\infty$.  Elements of $\Sc$ will be termed 
test functions.  Sequence $(\phi_j)\subset\Sc$ is said to converge to
$\phi\in\Sc$ if for all integers $m,n\geq 0$, 
$\sup_{x\in\RR}|x|^m|\phi^{(n)}_j(x)-\phi^{(n)}(x)|\to0$ as $j\to\infty$.
The (tempered) distributions are then the continuous linear
functionals on $\Sc$.  This dual space is denoted $\Sc'$.  
If $T\in\Sc'$ then $T\fn\Sc'\to\RR$
and we write $\langle T,\phi\rangle\in\RR$ for $\phi\in\Sc$.  If
$\phi_j\to\phi$ in $\Sc$ then $\langle T,\phi_j\rangle\to\langle T,\phi\rangle$
in $\RR$.  And, for all $a_1, a_2\in\RR$ and all $\phi,\psi\in\Sc$, $\langle T,
a_1\phi+a_2\psi\rangle =a_1\langle T,\phi\rangle+a_2\langle T,\psi\rangle$.  If
$f\in L^p_{loc}$  for some $1\leq p\leq\infty$ then $\langle T_f,\phi\rangle=\int_{-\infty}^\infty
f(x)\phi(x)\,dx$ defines a distribution.  In such case, $T_f$ is
called regular and we
will often ignore the distinction between $f$ and $T_f$. The  
differentiation  formula $\langle D^nT,\phi\rangle=\langle T^{(n)},
\phi\rangle=(-1)^n\langle T,\phi^{(n)}\rangle$ ensures that all 
distributions have 
derivatives of all  orders which are themselves distributions.  
This is
known as the distributional or weak derivative.
We  will  usually denote distributional
derivatives by $D^nF$, $F^{(n)}$ or $F'$ and  pointwise derivatives by $F^{(n)}(t)$
or $F'(t)$.
For $T\in\Sc'$ and $t\in\RR$  the translation $\tau_t$ is defined
by $\langle\tau_tT, \phi\rangle=\langle T, \tau_{-t}\phi\rangle$
where $\tau_t\phi(x)=\phi(x-t)$ for
$\phi\in\Sc$.  
The Heaviside step function
is $H=\chi_{(0,\infty)}$.  The Dirac distribution is $\delta=H'$.
The action of $\delta$ on test function $\phi$ is $\langle\delta,\phi\rangle
=\phi(0)$.  See \cite{folland,friedlanderjoshi,zemanian} 
for more on
distributions.

Laurent Schwartz introduced the notion of integrable distribution,
see \cite[p.~199-203]{schwartz} and \cite{barros-neto}.
Define the test function spaces as
${\mathcal D}_{L^p}(\RR)=\{\phi\in C^\infty(\RR)\mid \phi^{(m)}\in L^p(\RR)
\text{ for each } m\geq 0\}$ for $1\leq p<\infty$.  For $p=\infty$, define
${\mathcal B}={\mathcal D}_{L^\infty}(\RR)=\{\phi\in C^\infty(\RR)\mid 
\norm{\phi}_\infty<\infty\}$ and
${\stackrel.{\mathcal B}}={\stackrel.{\mathcal D}}_{L_\infty}(\RR)
=\{\phi\in {\mathcal D}_{L^\infty}(\RR)\mid
\lim_{|x|\to\infty}\phi(x)=0\}$.
For $1\leq p<\infty$, a sequence $(\phi_n)\subset {\mathcal D}_{L^p}(\RR)$
(or in ${\stackrel.{\mathcal D}}_{L_\infty}(\RR)$)
converges to $\phi\in{\mathcal D}_{L^p}(\RR)$ 
(or $\phi\in{\stackrel.{\mathcal D}}_{L^\infty}(\RR)$), if
$\lim_{n\to\infty}\norm{\phi^{(m)}_n-\phi^{(m)}}_p=0$
for each $m\geq 0$.
The integrable distributions are then ${\mathcal D}'_{L^p}(\RR)$ which
is the dual of ${\mathcal D}_{L^q}(\RR)$ ($1<p<\infty$, $1/p+1/q=1$) and
${\mathcal D}'_{L^1}(\RR)$ which is the dual of ${\stackrel.{\mathcal B}}$.
Schwartz's main structure theorem \cite[p.~201]{schwartz} is that if
$1\leq p\leq\infty$ and $T$ is a distribution then 
$T\in{\mathcal D}'_{L^p}(\RR)$ if and only if $T=\sum_{n=0}^m F_n^{(n)}$ for
some $F_n\in L^p(\RR)$ and some $m\geq 0$.  If this expansion holds then
the functions $F_n$ can be taken to be
bounded and continuous.

Our theory differs in that we take $T=F^{(m)}$ for some $F\in L^p(\RR)$ and
some $m\geq 1$.  This is a restricted form of Schwartz's definition but it
has the advantage that the resulting space of distributions is a Banach
space isometrically isomorphic to $L^p(\RR)$.  This provides a class of 
distributions that behave in many ways like $L^p$ functions.

\section{The $L^p$ primitive integral}\label{sectionLp}
In this section we define Banach spaces $\Lpp$ and $I^q$ that
are isometrically isomorphic to $L^p$ and $L^q$, respectively.
The first serves as a space of integrable distributions and the
second as a space of multipliers.  The distributional
derivative provides a linear isometry
between $L^p$ and $\Lpp$ and many properties of $L^p$ are inherited
by $\Lpp$.  Hence, $\Lpp$ is a separable Banach space, reflexive with
dual space isometrically isomorphic to $L^q$ for $1/p+1/q=1$.  There is
a H\"older inequality and we prove a convergence theorem.  The pointwise
ordering on $L^p$ is inherited by $\Lpp$ so that it is a Banach lattice
and abstract $L$-space.  A version of the dominated convergence theorem
based on this ordering is given.  At the end of this section, the
integral is also defined in terms of the limit of a
sequence of derivatives of step functions.

\begin{definition}\label{defnprimitive}
Let $1\leq p\leq\infty$.
(a) Define $\Lpp=\{f\in\Sc'\mid f=F'
\text{ for } F\in L^p\}$. (b) Define $I^p=\{G\fn\RR\to\RR\mid
G(x)=\int_0^xg(t)\,dt \text{ for some } g\in L^p\}$.
\end{definition}

\begin{theorem}\label{theoremunique}
(a) Let $1\leq p<\infty$.
If $f\in\Lpp$ there is a unique function $F\in L^p$ such that
$f=F'$.
(b) Let $1\leq p\leq \infty$.
If $G\in I^p$ there is a unique function $g\in L^p$ such that
$G(x)=\int_0^xg(t)\,dt$.
\end{theorem}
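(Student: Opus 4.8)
The plan is to prove each part by establishing uniqueness directly, since the existence of a representing function is built into the definitions of $\Lpp$ and $I^p$. In each case I would take two candidate representatives, form their difference, and show that this difference is the zero element of the relevant space, where two functions are identified when they agree almost everywhere.

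For part (a), suppose $f=F_1'=F_2'$ with $F_1,F_2\in L^p$ and set $H=F_1-F_2\in L^p$, so that $H'=0$ in $\Sc'$. The decisive step is the classical fact that a distribution with vanishing derivative is a constant. Since $H\in L^p\subset L^1_{loc}$ is a regular distribution, it suffices to test against $\phi\in\D$: from the differentiation formula, $\langle H,\phi'\rangle=-\langle H',\phi\rangle=0$. I would then fix $\rho\in\D$ with $\intinf\rho(x)\,dx=1$ and decompose an arbitrary $\phi\in\D$ as $\phi=\bigl(\intinf\phi(x)\,dx\bigr)\rho+\phi_0$, where $\phi_0$ has zero integral and is therefore the derivative of a function in $\D$; this yields $\langle H,\phi\rangle=c\intinf\phi(x)\,dx$ with $c=\langle H,\rho\rangle$, i.e. $H=c$ almost everywhere. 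Finally, $H\in L^p$ with $1\le p<\infty$ forces $c=0$, since a nonzero constant is not $p$-integrable over $\RR$; hence $F_1=F_2$ almost everywhere. This last step is precisely where $p<\infty$ is used: nonzero constants lie in $L^\infty$, so the statement genuinely fails at $p=\infty$.

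For part (b), suppose $G(x)=\int_0^x g_1(t)\,dt=\int_0^x g_2(t)\,dt$ for all $x$, with $g_1,g_2\in L^p$, and put $h=g_1-g_2\in L^p\subset L^1_{loc}$, so that $\int_0^x h(t)\,dt=0$ for every $x\in\RR$. Because $h\in L^1_{loc}$, the indefinite integral $x\mapsto\int_0^x h(t)\,dt$ is absolutely continuous on each compact interval, and by the Lebesgue differentiation theorem its pointwise derivative equals $h$ almost everywhere; as this integral is identically zero, its derivative vanishes, so $h=0$ almost everywhere and $g_1=g_2$. No restriction on $p$ is required here, because the integrand is recovered directly rather than through a primitive defined only up to an additive constant, which is why (b) covers the full range $1\le p\le\infty$.

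The only substantive ingredient is the constancy lemma invoked in (a); granting it, both arguments are short. I anticipate no real obstacle, the single point meriting care being the verification that the auxiliary primitive of $\phi_0$ indeed lies in $\D$. This is immediate: if $\phi_0$ is supported in $[-R,R]$ and has zero total integral, then $x\mapsto\int_{-\infty}^x\phi_0(t)\,dt$ vanishes for $x<-R$ and again for $x>R$, hence is compactly supported and smooth, so that $\langle H,\phi_0\rangle=0$ as claimed.
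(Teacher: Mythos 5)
Your proof is correct and takes essentially the same route as the paper: form the difference of two representatives, use that a distribution with vanishing distributional derivative is a constant, and observe that the only constant in $L^p$ for $1\le p<\infty$ is $0$; in (b) both arguments reduce to differentiating the (identically zero) indefinite integral almost everywhere. The only difference is one of packaging: you prove the constancy lemma inline via the decomposition $\phi=\bigl(\intinf\phi(x)\,dx\bigr)\rho+\phi_0$, whereas the paper simply cites it (Friedlander--Joshi, \S 2.4), and you make explicit the Lebesgue differentiation step in (b) that the paper states without proof.
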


\begin{proof} (a) If $f\in\Lpp$ and $f=F_1'=F_2'$ then let $F=F_1-F_2$.
Hence, $F\in L^p$ and $F'=0$ in $\Sc'$.  It follows that $F$ is a
constant distribution \cite[\S2.4]{friedlanderjoshi}.  The only constant 
distribution in $L^p$ is $0$. (b)  If there are $g_1,g_2\in L^p$ such
that $G(x)=\int_0^xg_1(t)\,dt =\int_0^xg_2(t)\,dt$ for all $x\in\RR$
then $g_1=g_2$ almost everywhere.\end{proof}

This uniqueness is within the equivalence class structure on
$L^p$.  Two functions in $L^p$ are equivalent if they are equal almost
everywhere.  We always consider $L^p$ as a disjoint union of these
equivalence classes. The
unique function $F$ in Theorem~\ref{theoremunique} is called the
primitive of $f$.  If $f\in \Lpp$ and $F$ is its primitive in 
$L^p$ then $\langle f,\phi\rangle=\langle F',\phi\rangle=
-\langle F,\phi'\rangle=-\intinf F(x)\phi'(x)\,dx$ for all $\phi\in\Sc$.
Since $\phi'$ is in each $L^q$ for all $1\leq q\leq \infty$, this last
integral exists by the H\"older inequality.

Now we can show that $\Lpp$ is a Banach space isometrically isomorphic
to $L^p$.

\begin{theorem}\label{theoremLp}
Let $1\leq p<\infty$.
Let $f,f_1,f_2\in\Lpp$.  Let their respective primitives in $L^p$ be
$F, F_1, F_2$.  Let $a_1,a_2\in\RR$.  Define $a_1f_2+a_2f_2=(a_1F_1+a_2F_2)'$.
Define $\norm{f}'_p=\norm{F}_p$.  Then $\Lpp$ has the following properties.  
(a) It is a Banach space with norm $\norm{\cdot}'_p$, isometrically
isomorphic to $L^p$.  (b) It is separable.
(c) Its dual space is isometrically isomorphic to $I^q$ where
$q$ is conjugate to $p$. 
\end{theorem}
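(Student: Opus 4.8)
The plan is to leverage the distributional derivative $D\colon L^p\to\Lpp$, $F\mapsto F'$, as the organizing isomorphism, so that every property of $\Lpp$ is transported from the corresponding property of $L^p$. First I would verify that $D$ is a well-defined linear bijection: surjectivity is immediate from the definition of $\Lpp$, and injectivity is exactly the uniqueness of primitives established in Theorem~\ref{theoremunique}(a). Linearity holds because the addition and scalar multiplication on $\Lpp$ were \emph{defined} through primitives, i.e.\ $a_1f_1+a_2f_2=(a_1F_1+a_2F_2)'$. By construction $\norm{f}'_p=\norm{F}_p=\norm{D^{-1}f}_p$, so $D$ is an isometry; one should also confirm that $\norm{\cdot}'_p$ is genuinely a norm on $\Lpp$, but positivity, homogeneity, and the triangle inequality all pull back verbatim from those of $\norm{\cdot}_p$ on $L^p$ via this isometry, with $\norm{f}'_p=0\iff\norm{F}_p=0\iff F=0\iff f=0$ in $\Sc'$.

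For part~(a), completeness of $\Lpp$ follows because a linear surjective isometry carries Cauchy sequences to Cauchy sequences and preserves limits: given a Cauchy sequence $(f_n)\subset\Lpp$, the primitives $(F_n)$ form a Cauchy sequence in $L^p$, which converges to some $F\in L^p$ by completeness of $L^p$, and then $f_n\to F'$ in $\norm{\cdot}'_p$. Thus $\Lpp$ is a Banach space, and since $D$ is a surjective linear isometry it is by definition an isometric isomorphism onto $L^p$. Part~(b) is then automatic: separability is a property preserved under homeomorphism (indeed under any isometric isomorphism), and $L^p$ is separable for $1\le p<\infty$, so $\Lpp$ is separable as well; one can exhibit a countable dense set explicitly as $\{G'\mid G\in\Sc\text{ with rational data}\}$ if a concrete witness is preferred, but the abstract argument suffices.

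Part~(c) is where the real content lies, so I would treat it last and in the most detail. The dual $(\Lpp)^*$ is isometrically isomorphic to $(L^p)^*$ through the adjoint of $D^{-1}$, and by the Riesz representation theorem $(L^p)^*\cong L^q$ for $1\le p<\infty$; the task is to make this identification concrete in terms of the integral $\intinf fG$ defined in Section~\ref{sectionLp} and to show the representing objects live in $I^q$ rather than merely $L^q$. Given a bounded linear functional $\Phi$ on $\Lpp$, the composite $\Phi\circ D$ is a bounded linear functional on $L^p$, hence is represented by a unique $g\in L^q$ via $\Phi(F')=-\intinf F(x)g(x)\,dx$; setting $G(x)=\int_0^x g(t)\,dt$ puts $G\in I^q$ and gives $\Phi(f)=\intinf fG$ by the very definition of the integral. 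The hard part will be the norm computation: I must show $\norm{\Phi}=\norm{g}_q$, where the operator norm is $\sup\{|\Phi(f)|:\norm{f}'_p\le1\}=\sup\{|\intinf Fg\,dx|:\norm{F}_p\le1\}$, and identify this with a natural norm on $I^q$. Here I expect to define the norm on $I^q$ so that $G\mapsto g$ is itself an isometry $I^q\cong L^q$ (mirroring Theorem~\ref{theoremunique}(b)), after which the equality $\norm{\Phi}=\norm{g}_q$ is exactly the duality $(L^p)^*=L^q$, and care must be taken to confirm both the injectivity of the map $\Phi\mapsto G$ (uniqueness of $g$) and its surjectivity (every $G\in I^q$ yields a bounded functional), which together with the norm identity give the claimed isometric isomorphism $(\Lpp)^*\cong I^q$.
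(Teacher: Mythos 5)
Your proposal is correct and takes essentially the same approach as the paper: the paper's proof simply defines $D\fn L^p\to\Lpp$ by $D(F)=F'$, notes it is injective by Theorem~\ref{theoremunique} and surjective by the definition of $\Lpp$, hence an isometric isomorphism, and then says the three properties follow from the corresponding properties of $L^p$. Your write-up merely spells out the transfer arguments (Cauchy sequences for completeness, preservation of separability, and the Riesz representation $(L^p)^*\cong L^q$ composed with $D$ and the isometry $I^q\cong L^q$) that the paper leaves implicit.
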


\begin{proof} Define $D\fn L^p\to\Lpp$ by $D(F)=F'$.  By 
Theorem~\ref{theoremunique}, $D$ is injective.  And $D$ is
surjective by the definition of
$\Lpp$.  It then follows that $L^p$ and $\Lpp$ are isometrically
isomorphic.  The other properties then follow from the corresponding
properties in $L^p$.\end{proof}

The inverse $D^{-1}\fn\Lpp\to L^p$ can be formally computed as follows.
Let $x\in\RR$ and define $G_x\in L^\infty$ by $G_x=\chi_{(-\infty,x)}$.
Let $\delta_x$ be Dirac measure supported at $x$.  Let $f=F'\in\Lpp$.
Then $\intinf fG_x=-\intinf F(t)G_x'(t)\,dt=\intinf F(t)\tau_x\delta(t)\,dt
=\intinf F(t)\,d\delta_x(t)=\langle \tau_x\delta, F\rangle =F(x)$ for
almost all $x\in\RR$. This a purely formal calculation since
$\intinf fG_x$ has not been defined and $G_x$ is not in $L^p$ 
for any $1\leq p<\infty$.  However, let $n\in\NN$ such that $n>-x$ and define
$$
G_{n,x}(t)=\left\{\!\!\begin{array}{cl}
0, & t\leq -2n\\
(t+2n)/n, & -2n\leq t\leq -n\\
1, & -n\leq t\leq x\\
-n(t-x-1/n), & x\leq t\leq x+1/n\\
0, & t\geq x+1/n.
\end{array}
\right.
$$
Then $G_{n,x}\in I^q$ for each $1\leq q\leq\infty$.  Define
$F_n(x)=\intinf fG_{n,x}$.  Then 
$F_n(x)=-\intinf F(t)G'_{n,x}(t)\,dt=A_n+B_n(x)$ where
$A_n=-n^{-1}\int_{-2n}^{-n}F(t)\,dt$ and $B_n(x)=n\int_{x}^{x+1/n}F(t)\,dt$.  
By the H\"older inequality $A_n\to 0$ as $n\to\infty$.  By the Lebesgue
differentiation theorem $B_n(x)\to F(x)$ for almost every $x\in\RR$.
Hence, $\lim_{n\to\infty}F_n(x)=F(x)$ for almost every $x\in\RR$ as 
$n\to\infty$.  This then gives a means of computing $D^{-1}$.

The case $p=\infty$ is rather different as the
dual space of $L^\infty$ is not one of the $L^p$ spaces.  See 
\cite[IV.9 Example~5]{yosida}.
We postpone
this case for consideration elsewhere.

The space $I^q$ is isometrically isomorphic to the dual
space of $\Lpp$.
\begin{theorem}
Let $1\leq q\leq\infty$. (a) $I^q\subset AC(\RR)$. (b) 
For $G\in I^q$ define $\norm{G}_{I,q}=
\norm{G'}_q$ then under usual pointwise operations $I^q$ is a
Banach space isometrically isomorphic to $L^q$.
\end{theorem}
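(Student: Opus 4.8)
The plan is to establish the two claims (a) and (b) in turn, relying on the isometric-isomorphism machinery already set up for $\Lpp$ and on the standard correspondence between $AC(\RR)$ and $L^1_{loc}$ recorded in Section~\ref{sectionnotation}.

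For part (a), I would argue directly from the definition of $I^q$. If $G\in I^q$ then by Definition~\ref{defnprimitive}(b) there is $g\in L^q$ with $G(x)=\int_0^xg(t)\,dt$. For any $1\leq q\leq\infty$ and any compact interval $[a,b]$, the H\"older inequality gives $g\chi_{[a,b]}\in L^1$, so $g\in L^1_{loc}$. The characterisation of $AC(\RR)$ stated in the Notation section, namely that $F\in AC(\RR)$ if and only if $F(x)=F(0)+\int_0^x f(t)\,dt$ for some $f\in L^1_{loc}$, then applies verbatim with $f=g$ and $F(0)=G(0)=0$, yielding $G\in AC(\RR)$. This part is routine.

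For part (b), I would introduce the map $E\fn L^q\to I^q$ by $E(g)(x)=\int_0^x g(t)\,dt$. Surjectivity is immediate from the definition of $I^q$, and injectivity is exactly the uniqueness statement of Theorem~\ref{theoremunique}(b): distinct equivalence classes $g_1\neq g_2$ in $L^q$ give distinct primitives. Linearity of $E$ is clear, and the definition $\norm{G}_{I,q}=\norm{G'}_q$ makes $E$ an isometry once one checks that the distributional derivative of $E(g)$ is $g$ itself, so that $\norm{E(g)}_{I,q}=\norm{g}_q$; here I would note that for $G\in AC(\RR)$ the pointwise derivative $G'(x)=g(x)$ holds almost everywhere by the Lebesgue differentiation theorem and agrees with the distributional derivative. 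Thus $E$ is a linear isometric bijection onto $I^q$. Since $L^q$ is a Banach space and $E$ is a surjective linear isometry, $I^q$ inherits completeness and the Banach space structure, and the two spaces are isometrically isomorphic.

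The main obstacle, such as it is, lies in being careful about the $p=\infty$ (equivalently $q=1$, and its mirror $q=\infty$) endpoint cases and in confirming that the norm $\norm{G}_{I,q}=\norm{G'}_q$ is well-defined, i.e.\ that $G'$ is unambiguously the $L^q$ function whose integral is $G$. This is precisely where Theorem~\ref{theoremunique}(b) is needed to guarantee uniqueness of the density $g$, so that $\norm{G'}_q$ does not depend on a choice of representative. Everything else reduces to transporting the Banach space axioms across the isometry $E$, exactly as in the proof of Theorem~\ref{theoremLp}, so I would keep that portion brief.
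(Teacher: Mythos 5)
Your proposal is correct and matches the paper's approach: the paper likewise deduces the result from the fundamental theorem of calculus and Theorem~\ref{theoremunique}(b), observing that differentiation (equivalently, your integration map $E$, its inverse) is a linear isometric bijection between $I^q$ and $L^q$, through which completeness transfers. Your write-up simply fills in the details the paper leaves as ``immediate.''
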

This is an immediate consequence of the fundamental theorem of calculus
and Theorem~\ref{theoremunique}(b).
The pointwise derivative operator $D\fn I^q\to L^q$
defines a linear isometry.  Note that if
$G\in I^q$ for some $1<q\leq\infty$ then $G$ need not be bounded.
The space $I^\infty$ is the same as the Lipschitz functions that
vanish at $0$.  

The Sobolev space $W^{1,p}(\RR)$ consists of the absolutely
continuous $L^p$ functions
whose distributional derivative is also in $L^p$.  It is a Banach
space with norm $\norm{f}_{1,p}=\norm{f}_p+\norm{f'}_p$.  See
\cite{yosida}.  Clearly, $W^{1,p}(\RR)$ is a subspace of $I^p$.  It is not
a closed subspace.  For example, let 
$g_n=\beta_n\chi_{(0,\alpha_n)}-\beta_n\chi_{(\alpha_n,2\alpha_n)}$,
where $\alpha_n,\beta_n>0$ for each $n\in\NN$.  Suppose $1\leq p<\infty$.
Then $g_n\in L^p$ and $\norm{g_n}_p=2^{1/p}\alpha_n^{1/p}\beta_n$.
Let $G_n(x)=\int_0^xg_n(t)\,dt$.  Then $G_n\in AC(\RR)$ and $G_n(x)=0$
for $|x|\geq 2\alpha_n$ so $G_n\in L^p$.  And, $\norm{G_n}_p=
2^{1/p}(p+1)^{-1/p}\alpha_n^{1+1/p}\beta_n$.  For $p=\infty$ we
have $\norm{g_n}_\infty=\beta_n$ and $\norm{G_n}_\infty=\alpha_n\beta_n$.
Now let $\alpha_n=n^2$ and
$\beta_n=n^{-(1+2/p)}$.  Then $\norm{g_n}_p\to 0$ as $n\to \infty$ so
$G_n\to 0$ in $I^p$, while $\norm{G_n}_p\to\infty$ so $(G_n)$ does not
converge in $W^{1,p}(\RR)$.  For each $1\leq p\leq\infty$ then, $W^{1,p}(\RR)$ is not closed in $I^p$.

Now we can define an integral on $\Lpp$. 
\begin{definition}\label{defnintegral}
Let $1\leq p<\infty$ and let $q$ be its conjugate.
Let $f\in\Lpp$ and let $G\in I^q$.  The integral of $fG$ is
$\intinf fG=-\intinf F(x)G'(x)\,dx$ where $F\in L^p$ is the
primitive of $f$.  For each $a\in\RR$ define $\intinf fa=0$.
\end{definition}
This defines a bilinear product $\Lpp\times I^q\to
L^1$ with $(f,G)\mapsto -FG'$.
Some mathematicians may wish to refer to $\intinf fG$ as merely
a linear functional but we like the term integral.
The Lebesgue integral $\intinf F(x)G'(x)\,dx$ exists by the
H\"older inequality.  This leads to a version of the H\"older
inequality in $\Lpp$ and $I^q$.
\begin{theorem}[H\"older inequality]\label{theoremholder}
Let $1\leq p<\infty$ and let $q$ be its conjugate.
Let $f\in\Lpp$ with primitive $F\in L^p$ and let $G\in I^q$.   Then
$$\left|\intinf fG\right|=\left|\intinf F(x)G'(x)\,dx\right|
\leq \norm{F}_p\norm{G'}_q= \norm{f}'_p\norm{G}_{I,q}.$$
\end{theorem}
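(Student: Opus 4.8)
The plan is to prove this H\"older inequality in two pieces: the leftmost equality is definitional, and the central inequality is an immediate application of the classical H\"older inequality in $L^p\times L^q$. The rightmost equality is again definitional.

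First I would establish the leftmost equality. By Definition~\ref{defnintegral}, for $f\in\Lpp$ with primitive $F\in L^p$ and $G\in I^q$, the integral $\intinf fG$ is \emph{defined} to be $-\intinf F(x)G'(x)\,dx$. Taking absolute values gives $\left|\intinf fG\right|=\left|-\intinf F(x)G'(x)\,dx\right|=\left|\intinf F(x)G'(x)\,dx\right|$, so this step is purely a matter of unwinding the definition.

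Next I would handle the central inequality. Since $G\in I^q$, by Theorem~\ref{theoremunique}(b) there is a unique $g\in L^q$ with $G(x)=\int_0^x g(t)\,dt$, and by the fundamental theorem of calculus $G'=g\in L^q$ almost everywhere. With $F\in L^p$ and $G'\in L^q$ for conjugate exponents $1/p+1/q=1$, the product $F\cdot G'$ lies in $L^1$ and the classical H\"older inequality yields $\left|\intinf F(x)G'(x)\,dx\right|\leq\intinf|F(x)G'(x)|\,dx\leq\norm{F}_p\norm{G'}_q$. This requires no new machinery beyond the standard $L^p$ H\"older inequality, which may be assumed.

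Finally, the rightmost equality follows by substituting the two norm definitions already in place: $\norm{f}'_p=\norm{F}_p$ from Theorem~\ref{theoremLp}, and $\norm{G}_{I,q}=\norm{G'}_q$ from the definition of the norm on $I^q$. Combining the three pieces gives the full chain. I do not anticipate any genuine obstacle here; the result is essentially a transcription of classical H\"older into the isometric-isomorphism framework that Theorem~\ref{theoremLp} and Theorem~\ref{theoremunique}(b) have set up. The only point requiring a modicum of care is confirming that the pointwise derivative $G'$ of an element of $I^q$ coincides (almost everywhere) with the $L^q$ representative $g$, so that $\norm{G'}_q=\norm{g}_q$ is well-posed, but this is exactly what Theorem~\ref{theoremunique}(b) and the fundamental theorem of calculus guarantee.
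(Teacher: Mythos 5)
Your proof is correct and takes essentially the same route as the paper, which states this theorem with no separate argument precisely because, as you observe, it reduces immediately to the classical $L^p$--$L^q$ H\"older inequality applied to $F$ and $G'=g$, together with Definition~\ref{defnintegral} and the definitions of $\norm{\cdot}'_p$ and $\norm{\cdot}_{I,q}$. Your added care in identifying the pointwise derivative $G'$ with the $L^q$ representative $g$ via Theorem~\ref{theoremunique}(b) and the fundamental theorem of calculus is exactly the implicit content of the paper's setup.
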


A consequence of the H\"older inequality is the following convergence
theorem.
\begin{theorem}\label{theoremconvergence}
Let $1\leq p<\infty$ with conjugate $q$.
Suppose $f, f_n\in\Lpp$ and $G,G_n\in I^q$ for each $n\in\NN$.
If $\norm{f_n-f}'_p\to 0$ and $\norm{G_n-G}_{I,q}\to 0$ then
$\intinf f_nG_n\to\intinf fG$.
\end{theorem}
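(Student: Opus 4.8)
The plan is to reduce everything to the H\"older inequality of Theorem~\ref{theoremholder} by means of the standard add-and-subtract decomposition. The map $(f,G)\mapsto\intinf fG$ is bilinear, since it equals $-\intinf F(x)G'(x)\,dx$, the passage $f\mapsto F$ is linear, the passage $G\mapsto G'$ is linear, and the Lebesgue integral of the product $-FG'$ is linear in each factor separately. I would therefore first write
$$\intinf f_nG_n-\intinf fG=\intinf(f_n-f)G_n+\intinf f(G_n-G).$$

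Next I would estimate each term with Theorem~\ref{theoremholder}. For the first term this gives $\abs{\intinf(f_n-f)G_n}\leq\norm{f_n-f}'_p\,\norm{G_n}_{I,q}$, and for the second $\abs{\intinf f(G_n-G)}\leq\norm{f}'_p\,\norm{G_n-G}_{I,q}$. The first hypothesis $\norm{f_n-f}'_p\to0$ drives the leading factor of the first estimate to zero, while the second hypothesis $\norm{G_n-G}_{I,q}\to0$ drives the final factor of the second estimate to zero; the fixed quantity $\norm{f}'_p$ causes no trouble.

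The only point requiring an observation rather than a bare limit is the factor $\norm{G_n}_{I,q}$ appearing in the first estimate, which must be kept bounded. This is immediate because a norm-convergent sequence is bounded: from $\norm{G_n-G}_{I,q}\to0$ one obtains $\norm{G_n}_{I,q}\leq\norm{G_n-G}_{I,q}+\norm{G}_{I,q}\leq M$ for some constant $M$ and all $n$, by the triangle inequality in $I^q$. Hence both terms of the decomposition tend to zero, and the triangle inequality in $\RR$ yields $\intinf f_nG_n\to\intinf fG$. I do not expect any genuine obstacle here; the statement is a formal consequence of the bilinear structure of the integral together with the boundedness of convergent multiplier norms, exactly as in the classical fact that a jointly continuous bilinear map respects limits taken in each variable.
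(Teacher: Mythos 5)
Your proof is correct and follows essentially the same route as the paper: the paper also uses the decomposition $f_nG_n-fG=(f_n-f)G_n+f(G_n-G)$, the H\"older inequality of Theorem~\ref{theoremholder}, and the boundedness of $\norm{G_n}_{I,q}$. Your write-up simply fills in the details that the paper leaves as a one-sentence sketch.
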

The proof follows from the equality $f_nG_n-fG=(f_n-f)G_n+f(G_n-G)$,
linearity of the distributional derivative and the fact that 
$\norm{G_n}_{I,q}$
is bounded.

The H\"older inequality can also be used to demonstrate an
equivalent norm.  The corresponding result for $L^p$
appears in \cite[Proposition~6.13]{folland}.
\begin{prop}\label{eqnorms}
Let $f\in\Lpp$ with primitive $F\in L^p$ and conjugate $q$.
Define $\norm{f}''_p=\sup_G\intinf fG$ where the supremum is
taken over all $G\in I^q$ such that $\norm{G}_{I,q}\leq 1$.
Then $\norm{f}'_p=\norm{f}''_p$.
\end{prop}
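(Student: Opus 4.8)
The plan is to show the two inequalities $\norm{f}''_p\leq\norm{f}'_p$ and $\norm{f}'_p\leq\norm{f}''_p$ separately. The first is immediate from the H\"older inequality (Theorem~\ref{theoremholder}): for every admissible $G$ with $\norm{G}_{I,q}\leq 1$ we have $\intinf fG\leq\abs{\intinf fG}\leq\norm{f}'_p\norm{G}_{I,q}\leq\norm{f}'_p$, and taking the supremum over all such $G$ gives $\norm{f}''_p\leq\norm{f}'_p$.

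The substance is the reverse inequality, which amounts to exhibiting (or approximating) a near-extremal $G$. Since $\intinf fG=-\intinf F(x)G'(x)\,dx$ and $\norm{G}_{I,q}=\norm{G'}_q$, writing $g=G'\in L^q$ the problem reduces exactly to the classical dual formulation of the $L^p$ norm:
\begin{equation}
\norm{f}''_p=\sup_{\norm{g}_q\leq 1}\left(-\intinf F(x)g(x)\,dx\right)=\sup_{\norm{g}_q\leq 1}\intinf F(x)g(x)\,dx,
\end{equation}
the last equality holding because we may replace $g$ by $-g$. The identity $\norm{F}_p=\sup\{\intinf Fg\,dx:\norm{g}_q\leq 1\}$ is precisely \cite[Proposition~6.13]{folland}, so $\norm{f}''_p=\norm{F}_p=\norm{f}'_p$.

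The only point requiring care is the correspondence between the constraint set for $g$ and the constraint set for $G$. The supremum defining $\norm{f}''_p$ ranges over $G\in I^q$ with $\norm{G}_{I,q}\leq 1$, and by Theorem~\ref{theoremunique}(b) together with the fundamental theorem of calculus the map $G\mapsto G'$ is a bijection from $\{G\in I^q:\norm{G}_{I,q}\leq 1\}$ onto the closed unit ball $\{g\in L^q:\norm{g}_q\leq 1\}$. This is exactly the isometry $D\fn I^q\to L^q$ noted after Theorem~\ref{theoremunique}. Hence the two suprema are taken over matching sets and coincide.

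I expect the main (very mild) obstacle to be bookkeeping rather than analysis: one must confirm that \cite[Proposition~6.13]{folland} applies for the full range $1\leq p<\infty$ including $p=1$, where the dual characterisation of the $L^1$ norm via the $L^\infty$ unit ball still holds, and that replacing $g$ by $-g$ legitimately removes the sign so that the signed supremum equals the norm. Both are standard, so the proof is essentially a translation of the classical $L^p$ duality statement through the isometries $D\fn L^p\to\Lpp$ and $D\fn I^q\to L^q$.
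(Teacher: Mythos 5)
Your proof is correct, and its skeleton matches the paper's: H\"older for $\norm{f}''_p\leq\norm{f}'_p$, then $L^p$ duality transported through the isometries $D\fn L^p\to\Lpp$ and $D\fn I^q\to L^q$. The difference lies in how the reverse inequality is discharged. The paper mentions \cite[Proposition~6.13]{folland} only as the $L^p$ analogue of the statement; inside the proof it does not cite it but instead constructs the extremizer explicitly: $g(x)=\sgn(F(x))|F(x)|^{p-1}\norm{F}_p^{1-p}$, verifies $\norm{g}_q=1$, sets $G(x)=\int_0^x g(t)\,dt$, and computes $\left|\intinf F(x)g(x)\,dx\right|=\norm{F}_p$. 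In other words, the paper reproves the classical duality in this setting, which additionally shows the supremum is attained (by $G$ or $-G$), whereas you black-box precisely that construction; your route buys brevity, the paper's buys self-containedness and an explicit maximizer. One point where your write-up is more careful than the paper's: the definition of $\norm{f}''_p$ carries no absolute value, and since $\intinf fG=-\intinf F(x)g(x)\,dx$, the paper's choice of $g$ makes $\intinf fG=-\norm{F}_p$; your remark that one may replace $g$ by $-g$ (equivalently $G$ by $-G$, both admissible with the same $I^q$ norm) is exactly what justifies the paper's silent step $\norm{f}''_p\geq\left|\intinf F(x)g(x)\,dx\right|$. Your remaining checks are also sound: $G\mapsto G'$ is a norm-preserving bijection of unit balls by Theorem~\ref{theoremunique}(b), and the $p=1$, $q=\infty$ case poses no problem (Lebesgue measure is $\sigma$-finite, and the paper's construction there degenerates to $g=\sgn(F)$).
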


\begin{proof} By the H\"older inequality, 
$\norm{f}''_p\leq\norm{f}'_p$.  Without loss of generality $f\not=0$.  Let
$g(x)=\textrm{sgn}(F(x))|F(x)|^{p-1}\norm{F}_p^{1-p}$.  Then
$|g(x)|^q=|F(x)|^p\norm{F}_p^{-p}$ so $g\in L^q$ and $\norm{g}_q=1$.
Hence, $g\in L^1_{loc}$ so we can define $G(x)=\int_0^xg(t)\,dt$.
Then
$$
\norm{f}''_p  \geq  \left|\intinf F(x)g(x)\,dx\right|=
\frac{1}{\norm{F}_p^{p-1}}\intinf |F(x)|^p\,dx
  =  \norm{F}_p=\norm{f}_p'.
$$
This shows that $\norm{f}'_p=\norm{f}''_p$.\end{proof} 

Step functions can be used to
give an alternate definition of the integral.  If $I_n=(x_{n},y_n)$
are finite, disjoint intervals for $1\leq n\leq N$ then a step function is
$\sigma=\sum_{1}^Na_n\chi_{I_n}$, where $a_n\in\RR$.  The  integral is
$\intinf\sigma(x)\,dx=\sum_1^N a_n[y_n-x_n]$.  And, if $g\in L^q$ with
$G(x)=\int_0^xg(t)\,dt$ then $\intinf \sigma(x)g(x)\,dx
=\sum_1^Na_n[G(y_n)-G(x_n)]$.  The step functions are dense in $L^p$ so
for each $F\in L^p$ there is a sequence of step functions $(\sigma_n)$
such that $\norm{F-\sigma_n}_p\to 0$.  Now suppose $f=F'\in\Lpp$.
Define $s=\sigma'=\sum_1^Na_n[\tau_{x_n}\delta-\tau_{y_n}\delta]$
and $s_n=D(\sigma_n)$.
Then $|\intinf(fG-s_nG)|=|\intinf(Fg-\sigma_ng)|\leq
\norm{F-\sigma_n}_p
\norm{g}_q$.  Using Proposition~\ref{eqnorms} and taking the supremum over 
all $G\in I^q$ such that
$\norm{G}_{I,q}\leq 1$ shows $\norm{f-s_n}'_p\to 0$.   
Hence, we can approximate distributions in 
$\Lpp$ by differences of Dirac distributions as with $s$.  The
integral of $sG$ is defined $\intinf sG=-\intinf \sigma(x)g(x)\,dx
=-\sum_1^Na_n[G(y_n)-G(x_n)]$.  This gives an alternative definition
of the integral.

If $G\in I^q$ then there is $g\in L^q$ such that $G(x)=\int_0^xg(t)\,dt$.
This imposes the arbitrary condition $G(0)=0$.
If instead, we take
$G_a(x)=\int_a^xg(t)\,dt$ for some fixed $a\in\RR$ then
$G$ and $G_a$ differ by a constant.  This does not affect the
integral in Definition~\ref{defnintegral} since it only depends
on the derivative of $G$.

If $F$ and $G$ are absolutely continuous functions then the integration
by parts formula is
$$
\intinf F'(x)G(x)\,dx=\lim_{x\to\infty}F(x)G(x)-\lim_{x\to-\infty}F(x)G(x)
-\intinf F(x)G'(x)\,dx
$$
provided these limits exist.  When $FG$ vanishes at $\pm\infty$ the
integral in Definition~\ref{defnintegral} agrees with the Lebesgue
integral.  Similarly, it agrees with the Henstock--Kurzweil and
wide Denjoy integrals.  See \cite{celidze} for the relevant
spaces of primitives for these integrals.  These limit terms are omitted in
Definition~\ref{defnintegral}, as they are in the definition of
the distributional derivative.  For $F\in L^p$ and $G\in I^q$ the
product $FG$ vanishes in the following weak sense 
as $|x|\to\infty$.  
\begin{prop}\label{proplimitFG}
Let $F\in L^p$ and $G\in I^q$ where $1\leq p<\infty$ and $q$ is conjugate
to $p$.  For $0<M<N$ and $\epsilon>0$ define
$E_{(M,N),\epsilon}=\{x\in(M,N)\mid |F(x)G(x)|>\epsilon\}$. 
Then 
\begin{eqnarray}
\frac{\lambda(E_{(M,N),\epsilon})}{N-M} & \leq & \left\{\!\!\begin{array}{cl}
\frac{\norm{F\chi_{(M,N)}}_p
\norm{G'}_q\left(N^q-M^q\right)^{1/q}}{\epsilon \,q^{1/q}(N-M)}, & 1<p<\infty\\
\label{MNlimit}
\frac{\norm{F\chi_{(M,N)}}_1
\norm{G'}_\infty N}{\epsilon(N-M)}, &  p=1.
\end{array}
\right.
\end{eqnarray}
\end{prop}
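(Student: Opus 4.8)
The plan is to combine Markov's inequality with two applications of the ordinary Hölder inequality. Since $F\in L^p$ and $G\in I^q$, the product $FG$ is integrable on $(M,N)$, so Markov's (Chebyshev's) inequality gives
$$
\lambda(E_{(M,N),\epsilon})\leq\frac{1}{\epsilon}\int_M^N|F(x)G(x)|\,dx.
$$
A first Hölder estimate then separates the two factors as $\int_M^N|FG|\leq\norm{F\chi_{(M,N)}}_p\norm{G\chi_{(M,N)}}_q$. Dividing by $N-M$ already exhibits the shape of both claimed bounds, so the task reduces to estimating $\norm{G\chi_{(M,N)}}_q$ in terms of $\norm{G'}_q$ and the interval.

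The key ingredient is a pointwise bound on $G$. Writing $g=G'$, so that $G(x)=\int_0^xg(t)\,dt$, a second Hölder estimate on $(0,x)$ gives, for $x>0$,
$$
|G(x)|\leq\int_0^x|g(t)|\,dt\leq\norm{g}_q\,x^{1/p}=\norm{G'}_q\,x^{1/p}\qquad(1<p<\infty),
$$
and $|G(x)|\leq x\norm{G'}_\infty$ when $p=1$. For $1<p<\infty$ I would raise this bound to the $q$th power and integrate, using the identity $q/p=q-1$ (which follows from $p^{-1}+q^{-1}=1$) to evaluate $\int_M^N x^{q/p}\,dx=\int_M^N x^{q-1}\,dx=(N^q-M^q)/q$. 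This yields
$$
\norm{G\chi_{(M,N)}}_q\leq\norm{G'}_q\,\frac{(N^q-M^q)^{1/q}}{q^{1/q}},
$$
and substituting into the Markov--Hölder estimate and dividing by $N-M$ produces exactly the first line of \eqref{MNlimit}.

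The case $p=1$ follows the same route but is simpler: since $0<M<N$ the pointwise bound gives $\norm{G\chi_{(M,N)}}_\infty\leq N\norm{G'}_\infty$, and inserting this into the Markov--Hölder estimate and dividing by $N-M$ gives the second line of \eqref{MNlimit}. I do not expect a genuine obstacle here; the proposition is essentially a quantitative rephrasing of the Hölder inequality, and the only point needing care is the exponent bookkeeping $q/p=q-1$. Retaining the factor $x^{1/p}$ inside the integral, rather than bounding it crudely by $N^{1/p}$, is what produces the sharp $(N^q-M^q)^{1/q}$ appearing in the statement.
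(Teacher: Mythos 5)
Your proposal is correct and follows essentially the same route as the paper: Chebyshev's inequality, then H\"older to split off $\norm{F\chi_{(M,N)}}_p$, then an estimate of $\norm{G\chi_{(M,N)}}_q$ using $G(x)=\int_0^xG'(t)\,dt$, with the exponent identity $q/p=q-1$ giving $(N^q-M^q)/q$. The only cosmetic difference is that where you bound $|G(x)|\leq\norm{G'}_q\,x^{1/p}$ by H\"older on $(0,x)$, the paper gets the identical pointwise bound $|G(x)|^q\leq x^{q-1}\norm{G'}_q^q$ from Jensen's inequality with the uniform probability measure $dt/x$; the two steps are interchangeable here.
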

\begin{proof}
The H\"older inequality gives $\norm{FG\chi_{(M,N)}}_1\leq\norm
{F\chi_{(M,N)}}_p\norm{G\chi_{(M,N)}}_q$.  For $1<p<\infty$ use 
Jensen's inequality,
for example, \cite[p.~109]{folland}, to get
$$
\norm{G\chi_{(M,N)}}_q^q  =  \int_M^N\left|\int_0^x G'(t)\frac{dt}{x}
\right|^q x^q\,dx
  \leq  \norm{G'}_q^q(N^q-M^q)/q.
$$
For $p=1$ we have $\norm{G\chi_{(M,N)}}_\infty=\sup_{x\in(M,N)}|\int_0^x
G'(t)\,dt|\leq \norm{G'}_\infty N$.
As in the proof of the Chebyshev inequality \cite[6.17]{folland} we have
$\norm{FG\chi_{(M,N)}}_1\geq\epsilon\lambda(E_{(M,N),\epsilon})$.  The
result now follows. \end{proof}

For $1\leq p<\infty$,
let $M,N\to\infty$ in \eqref{MNlimit} such that
$M\leq \delta N$ for some $0<\delta<1$.  Since $F\in L^p$ we have 
$\norm{F\chi_{(M,N)}}_p\to 0$.  Hence, the measure of $E_{(M,N),\epsilon}$
relative to the interval $(M,N)$ tends to $0$ in this limit.  In this
weak sense, $F(x)G(x)\to 0$ as $|x|\to \infty$.

Due to the isometry,
properties of $L^p$ that depend only on the norm carry over
to $\Lpp$.  Here we list a few such properties, each with a 
reference to the $L^p$ result.  
Proofs of the $\Lpp$ result follow from the $L^p$ result using the
fact that the 
distributional derivative provides a linear isometry
and isomorphism between $\Lpp$ and $L^p$.
\begin{theorem}\label{theoremLpresults}
(a)  For $1\leq p<\infty$, $\Sc$ is dense in $\Lpp$.
(b) $\Lpp$ is uniformly convex for $1<p<\infty$.
Reflexivity follows by Milman's theorem.
(c) The unit ball of $\Lpp$ is strictly convex for $1<p<\infty$.
(d) Hanner's inequality holds in $\Lpp$ for $1\leq p<\infty$.
(e) Homogeneity of norm for $1\leq p<\infty$:  
If $f\in\Sc'$ then
$f\in\Lpp$ if and only if $\tau_xf\in\Lpp$ for each $x\in\RR$.
If $f\in\Lpp$ then $\norm{\tau_xf}'_p=\norm{f}'_p$.
(f) Continuity in norm for $1\leq p<\infty$: $\lim_{x\to 0}\norm{\tau_xf-f}'_p=0$ for
each $f\in\Lpp$.
(g) Gateaux derivative: Let $f,g\in\Lpp$ for $1<p<\infty$.  Define
$M(t)=\norm{f+tg}'_p$.  Then $M$ is differentiable and 
$|M'(t)|\leq\norm{g}'_p$.
\end{theorem}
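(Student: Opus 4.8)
The plan is to treat each item as the transfer of a standard $L^p$ fact across the linear isometry $D\fn L^p\to\Lpp$, $D(F)=F'$, furnished by Theorem~\ref{theoremLp}, whose inverse is the primitive map and which satisfies $\norm{D(F)}'_p=\norm{F}_p$ by definition of the norm on $\Lpp$. Any property of $L^p$ formulated purely in terms of its norm and linear structure is preserved verbatim by an onto linear isometry. Thus (b), (c) and (d) follow at once from the corresponding $L^p$ statements — Clarkson's inequalities for uniform convexity in (b), strict convexity of the $L^p$ ball for (c), and Hanner's inequality for (d) — and the reflexivity asserted in (b) is then Milman's theorem applied to the uniformly convex space $\Lpp$.

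The only structural input not already carried by the norm is how $D$ interacts with translation, which is what (e) and (f) require. First I would record the identity $\tau_x(F')=(\tau_xF)'$ for $F\in L^p$ and $x\in\RR$: testing against $\phi\in\Sc$ and unwinding the definitions of $\tau_x$ on $\Sc'$ and of the distributional derivative gives $\langle\tau_x(F'),\phi\rangle=-\langle F,\tau_{-x}(\phi')\rangle=-\langle\tau_xF,\phi'\rangle=\langle(\tau_xF)',\phi\rangle$, using that translation commutes with differentiation on $\Sc$. Hence $\tau_xf=(\tau_xF)'$ whenever $f=F'\in\Lpp$; applying this with $x$ and with $-x$ yields the equivalence $f\in\Lpp\iff\tau_xf\in\Lpp$ of (e), and since translation is an isometry of $L^p$ we obtain $\norm{\tau_xf}'_p=\norm{\tau_xF}_p=\norm{F}_p=\norm{f}'_p$. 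For (f) the same identity gives $\norm{\tau_xf-f}'_p=\norm{\tau_xF-F}_p\to0$ as $x\to0$ by continuity of translation in the $L^p$ norm.

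For (g) I would observe that $M(t)=\norm{f+tg}'_p=\norm{F+tG}_p$, with $F,G\in L^p$ the primitives of $f,g$, so that the differentiability of $M$ and the bound $\abs{M'(t)}\leq\norm{G}_p=\norm{g}'_p$ are precisely the known $L^p$ statement; here $M'(t)=\norm{F+tG}_p^{1-p}\intinf\sgn\bigl(F(x)+tG(x)\bigr)\abs{F(x)+tG(x)}^{p-1}G(x)\,dx$ and the bound is the H\"older inequality. For (a), density of $\Sc$ in $L^p$ does the work: given $f=F'$ and $\epsilon>0$, choose $\psi\in\Sc$ with $\norm{F-\psi}_p<\epsilon$; then $\psi'\in\Sc$, $\psi'\in\Lpp$ since $\psi\in L^p$, and $\norm{f-\psi'}'_p=\norm{F-\psi}_p<\epsilon$, so the test functions that lie in $\Lpp$ are dense. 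Equivalently, $D(\Sc)=\{\phi'\fn\phi\in\Sc\}$ is dense in $\Lpp$ and is contained in $\Sc$.

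The work is almost entirely bookkeeping: once Theorem~\ref{theoremLp} provides the isometry, none of the items needs a genuinely new estimate. The two places where care is required are the translation computation underlying (e) and (f), where one must pass the translation through the distributional derivative in the correct order, and the reading of (a), where the relevant dense family is $D(\Sc)$, sitting simultaneously inside $\Sc$ and inside $\Lpp$. I do not expect any serious obstacle beyond keeping these identifications straight and invoking the right $L^p$ results.
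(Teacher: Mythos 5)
Your proposal is correct and takes essentially the same approach as the paper: the paper's proof consists precisely of invoking the isometric isomorphism $D\fn L^p\to\Lpp$ and citing the corresponding $L^p$ results (Clarkson's inequalities, strict convexity, Hanner, translation invariance and continuity from Rudin, the Gateaux derivative from Lieb--Loss). Your write-up merely supplies details the paper leaves implicit --- notably the commutation identity $\tau_x(F')=(\tau_xF)'$ underlying (e) and (f), and the observation in (a) that the relevant dense family is $D(\Sc)$, which lies in both $\Sc$ and $\Lpp$ --- so there is no substantive difference.
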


\begin{proof}
(a) See \cite[p.~245]{folland}.
(b) See \cite[p.~126]{yosida}  and \cite{clarkson}.
(c) See \cite[p.~112]{rudinreal}.
(d) See \cite[p.~49]{liebloss}.
(e) See \cite[p.~182]{rudinreal}.
(f) See \cite[p.~182]{rudinreal}.
(g) See \cite[p.~51]{liebloss}.\end{proof}

Each $L^p$ space is a Dedekind complete Banach lattice 
under the partial order: $F\leq G$ if and only if
$F(x)\leq G(x)$ for almost all $x\in\RR$.
This lattice structure is inherited by $\Lpp$.
Here
we list only a few lattice properties shared by
$L^p$ and $\Lpp$.  See \cite{leader} for the lattice defined
by primitives of Henstock--Kurzweil integrable functions.
\begin{theorem}\label{theoremlattice}
In $\Lpp$ define $f\preceq g$ if and only if $F\leq G$, where
$F$ and $G$ are the respective primitives in $L^p$.  (a) $\Lpp$ is a
Banach lattice, Dedekind complete and lattice isomorphic to $L^p$.  (b)
$|f|=|D(F)|=D|F|$ and $\norm{|f|}'_p=\norm{f}'_p$.
(c) The space $\Lany{1}$ is an abstract $L$-space. 
\end{theorem}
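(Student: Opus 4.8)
The plan is to exploit the fact, established in Theorem~\ref{theoremLp}, that the distributional derivative $D\fn L^p\to\Lpp$ is a bijective linear isometry, and to observe that the order $\preceq$ has been defined precisely so that $D$ also preserves order in both directions: $F\leq G$ in $L^p$ if and only if $D(F)\preceq D(G)$ in $\Lpp$. Thus $D$ is simultaneously a linear isomorphism, an isometry, and an order isomorphism, so every lattice-theoretic and metric property of $L^p$ transports verbatim to $\Lpp$. Throughout, $F,G$ denote the primitives of $f,g$.

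For part (a), I would transport the lattice operations by defining $f\vee g=D(F\vee G)$ and $f\wedge g=D(F\wedge G)$, and then check that these are genuinely the supremum and infimum of $\{f,g\}$ in $(\Lpp,\preceq)$; this is immediate because $D^{-1}$ carries any $\preceq$-upper bound of $\{f,g\}$ to an $\leq$-upper bound of $\{F,G\}$. Compatibility of $\preceq$ with the vector space operations (invariance under translation by a fixed element and under multiplication by nonnegative scalars) follows at once from the linearity of $D$ and $D^{-1}$ together with the corresponding properties in $L^p$. Hence $\Lpp$ is a vector lattice. Completeness is already known from Theorem~\ref{theoremLp}(a), and the lattice-norm implication $|f|\preceq|g|\Rightarrow\norm{f}'_p\leq\norm{g}'_p$ reduces, using part (b), to $|F|\leq|G|\Rightarrow\norm{F}_p\leq\norm{G}_p$, which holds because $\norm{\cdot}_p$ is a lattice norm on $L^p$. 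Dedekind completeness transports the same way: a $\preceq$-bounded-above subset of $\Lpp$ corresponds under $D^{-1}$ to an order-bounded-above subset of $L^p$, whose supremum $S$ exists by Dedekind completeness of $L^p$, and $D(S)$ is then the least upper bound in $\Lpp$. Since $D$ is a linear order isomorphism, it is a lattice isomorphism onto $L^p$.

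For part (b), I would compute $|f|$ from the definition of absolute value in a vector lattice: $|f|=f\vee(-f)=D(F)\vee D(-F)=D\bigl(F\vee(-F)\bigr)=D|F|$, where the third equality uses that $D$ commutes with $\vee$ and the last that $F\vee(-F)=|F|$ in $L^p$; the norm identity is then $\norm{|f|}'_p=\norm{D|F|}'_p=\norm{|F|}_p=\norm{F}_p=\norm{f}'_p$. For part (c), recall that an abstract $L$-space in the sense of Kakutani is a Banach lattice whose norm is additive on the positive cone. Part (a) already gives that $\Lany{1}$ is a Banach lattice, so it remains only to verify additivity: if $f,g\succeq 0$ then $F,G\geq 0$, and transporting the well-known additivity of $\norm{\cdot}_1$ on the positive cone of $L^1$ gives $\norm{f+g}'_1=\norm{F+G}_1=\norm{F}_1+\norm{G}_1=\norm{f}'_1+\norm{g}'_1$.

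I expect no substantial obstacle here, since the entire content is that the isometric isomorphism $D$ transfers structure. The only points deserving care are to make explicit that the transported order really is a lattice order compatible with the linear structure, so that $\Lpp$ is a genuine vector lattice rather than merely an ordered vector space, and to invoke part (b) at the step where the lattice-norm property is checked, because the definition of that property refers to $|f|$.
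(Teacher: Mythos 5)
Your proof is correct and takes essentially the same approach as the paper: the paper defines $\preceq$ by pulling back the a.e.\ order through the isometric isomorphism $D$ and then simply cites Yosida [XII.2, XII.3] for the requisite facts that $L^p$ is a Dedekind complete Banach lattice and $L^1$ an abstract $L$-space. Your write-up just makes explicit the transport argument (lattice operations, Dedekind completeness, the lattice-norm property via $|f|=D|F|$, and additivity of the norm on the positive cone) that the paper leaves to that citation.
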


\begin{proof}
See \cite[XII.2, XII.3]{yosida}.  \end{proof}

A version of the dominated convergence theorem
is then the following \cite[Theorems~7.2, 7.8]{bartleelements}.
\begin{prop}
Let $1\leq p<\infty$.  Let $(f_n)\subset\Lpp$ with respective primitives
$(F_n)\subset L^p$.  Suppose there is a measurable function $F$ such that
$F_n\to F$ almost everywhere or in measure.  Suppose there is
$g\in\Lpp$ such that $|f_n|\preceq g$ in $\Lpp$.  Then $F'\in\Lpp$ and
$\lim_{n\to\infty}\norm{f_n-F'}'_p=0$.
\end{prop}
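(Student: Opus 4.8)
The plan is to transport all three hypotheses through the isometric lattice isomorphism $D\fn L^p\to\Lpp$ and thereby reduce the statement to the classical dominated convergence theorem in $L^p$. By Theorem~\ref{theoremlattice}(b) the primitive of $|f_n|$ is $|F_n|$, and if $G\in L^p$ denotes the primitive of $g$, then by the definition of $\preceq$ in Theorem~\ref{theoremlattice} the domination hypothesis $|f_n|\preceq g$ is exactly the pointwise statement $|F_n(x)|\leq G(x)$ for almost every $x\in\RR$, with $G\in L^p$. Thus the problem becomes: given $F_n\to F$ a.e.\ (or in measure) with $|F_n|\leq G\in L^p$, show $F\in L^p$ and $\norm{F_n-F}_p\to 0$; the conclusion $\norm{f_n-F'}'_p\to 0$ then follows since $\norm{f_n-F'}'_p=\norm{F_n-F}_p$ by the isometry.

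First I would establish that $F\in L^p$, so that $F'\in\Lpp$ is meaningful. When $F_n\to F$ almost everywhere, letting $n\to\infty$ in $|F_n|\leq G$ gives $|F|\leq G$ a.e., and since $G\in L^p$ this yields $F\in L^p$. When $F_n\to F$ only in measure, I would instead extract a subsequence $F_{n_k}\to F$ almost everywhere and apply the same bound along it to again obtain $|F|\leq G$ a.e.

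Next I would prove $\norm{F_n-F}_p\to 0$. In the a.e.\ case, the elementary estimate $|F_n-F|^p\leq(|F_n|+|F|)^p\leq(2G)^p\in L^1$ together with $|F_n-F|^p\to 0$ a.e.\ lets the ordinary dominated convergence theorem conclude $\int_{-\infty}^\infty|F_n-F|^p\,dx\to 0$. For the in-measure case I would use the subsequence principle: every subsequence of $(F_n)$ still converges to $F$ in measure, hence contains a further subsequence converging a.e., along which the previous argument forces the $L^p$ distance to $F$ to zero; since this holds for every subsequence, the full real sequence $\norm{F_n-F}_p$ tends to $0$.

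The computation is essentially routine, being a direct transcription of Bartle's $L^p$ convergence theorem through $D$. The only point requiring genuine care is the in-measure case, where one cannot pass to the limit pointwise directly: there I must run the a.e.\ argument along subsequences and invoke the subsequence characterisation of convergence of a real sequence, both to deduce $|F|\leq G$ a.e.\ and to recover norm convergence of the full sequence.
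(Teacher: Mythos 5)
Your proposal is correct and follows the same route as the paper: the paper proves this proposition simply by citing Bartle's $L^p$ dominated convergence theorems (Theorems~7.2 and 7.8 in \emph{Elements of integration}, covering the a.e.\ and in-measure cases respectively), after the implicit reduction through the isometric lattice isomorphism $D\fn L^p\to\Lpp$ that you spell out explicitly. The only difference is that you also reprove the cited $L^p$ theorems (domination by $(2G)^p$ plus the subsequence principle for the in-measure case), which the paper leaves to the reference.
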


If $F\in L^p$ and $g\in L^q$ with $G(x)=\int_0^xg(t)\,dt$ then
$\intinf F(x)g(x)\,dx$ can be computed using an increasing
sequence of step functions as
follows.  First write $F=F^+-F^-$ and $g=g^+-g^-$.  The product
$Fg$ then is a linear combination of four products of positive
functions.  Hence, it suffices to consider the case $F\geq 0$ and
$g\geq 0$.  In the following we take a supremum over step
functions $\sigma\leq F$ where $\sigma=\sum_1^{N_\sigma} a_n^{(\sigma)}
\chi_{(x_n^{(\sigma)},y_n^{(\sigma)})}$.  Here, $\sigma \leq F$ means
$\sigma(x)\leq F(x)$ for almost all $x\in \RR$.  Then
$\intinf F(x)g(x)\,dx=\sup_{\sigma\leq F}\sum_1^{N_\sigma} a_n^{(\sigma)}
[G(y_n^{(\sigma)})-G(x_n^{(\sigma)})]$.  But this is equivalent to
$\intinf fG=-\sup_{\sigma'\preceq f}\intinf \sigma'G$. And,
for $\sigma=\sum_1^Na_n\chi_{(x_n,y_n)}$ we have
\begin{eqnarray*}
\intinf \sigma'G & = & \intinf\sum_1^Na_n[\tau_{x_n}\delta-\tau_{y_n}\delta]G
=\sum_1^Na_n\left[\intinf (\tau_{x_n}\delta)G-\intinf(\tau_{y_n}\delta)G
\right]\\
 & = & \sum_1^Na_n(\langle\tau_{x_n}\delta,G\rangle-
\langle\tau_{y_n}\delta,G\rangle)
=\sum_1^Na_n[G(x_n)-G(y_n)].
\end{eqnarray*}
This then furnishes an equivalent definition of the integral.

\section{Examples}\label{sectionexamples}
In this section we give examples of functions in $\Lpp$ that are not
in any $L^p$ space.  We also show that $\Lpp$ contains some functions that
are not in $L^1_{loc}$ and some functions that have conditionally convergent
integrals.
A simple integral condition is given in
Proposition~\ref{propintcondition} that ensures a function is in $\Lpp$.
It is shown that differences of translated Dirac distributions can
be in $\Lpp$.  Distributions in $\Lpp$ can have primitives that
have no pointwise derivative at any point or a pointwise derivative
that vanishes almost everywhere.

First
note that $\Lpp$ contains many functions (i.e. regular distributions).
For example,
$\Sc\subset L^p$ for each $1\leq p<\infty$.  Let $F\in\Sc$ and
let $G\in I^q$ for any $1<q\leq\infty$.  Then 
$\intinf F'G=\intinf F'(x)G(x)\,dx=-\intinf F(x)G'(x)\,dx$.  Hence,
$\Sc\subset\Lpp$.  If
$f\in L{\!}'^{\,p}\cap L{\!}'^{\,q}$ then $\norm{f}'_p$ and $\norm{f}'_q$
may well be different.

\begin{prop}\label{propintcondition}
Let $f\fn\RR\to\RR$ such that (a) $\intinf f(t)\,dt=0$ 
and (b) $\intinf |t|^\alpha f(t)\,dt$
exists for some $\alpha>1/p$.  Then $f\in\Lpp$.
\end{prop}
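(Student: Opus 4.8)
The plan is to produce the primitive explicitly. Condition (a) forces the total integral of $f$ to vanish, so the natural candidate for the primitive is
$$F(x)=\int_{-\infty}^x f(t)\,dt=-\int_x^\infty f(t)\,dt,$$
the two expressions agreeing precisely because $\intinf f(t)\,dt=0$. Reading the hypotheses as requiring $f\in L^1_{loc}$ with the relevant improper integrals convergent, $F$ differs from $\int_0^x f(t)\,dt$ only by the constant $\int_{-\infty}^0 f(t)\,dt$, so $F\in AC(\RR)$ and $F'=f$ in $\Sc'$ by the fundamental theorem of calculus for the distributional derivative. It therefore suffices to prove $F\in L^p$, for then $f=F'\in\Lpp$ by Definition~\ref{defnprimitive}.

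To verify $F\in L^p$ I would split $\RR$ into $[-1,1]$ and the two tails. On the compact interval $F$ is continuous, hence bounded, and contributes a finite amount to $\norm{F}_p$; the whole difficulty lies in the tails, and this is where (b) is used. For $x\geq 1$ set $R(x)=\int_x^\infty t^\alpha f(t)\,dt$; by (b) this integral converges, so $R$ is continuous with $R(x)\to 0$ as $x\to\infty$ and hence bounded on $[1,\infty)$, say $\abs{R}\leq C$. Since $R'(t)=-t^\alpha f(t)$ we may write $f(t)=-t^{-\alpha}R'(t)$ and integrate by parts:
$$F(x)=\int_x^\infty t^{-\alpha}R'(t)\,dt=-x^{-\alpha}R(x)+\alpha\int_x^\infty t^{-\alpha-1}R(t)\,dt,$$
the boundary term at $+\infty$ vanishing because $t^{-\alpha}R(t)\to 0$. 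Bounding $\abs{R}\leq C$ then gives $\abs{F(x)}\leq Cx^{-\alpha}+\alpha C\int_x^\infty t^{-\alpha-1}\,dt=2Cx^{-\alpha}$.

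The exponent produced is exactly the one that matters: because $\alpha>1/p$ is equivalent to $\alpha p>1$, the bound $\abs{F(x)}\leq 2Cx^{-\alpha}$ yields $\int_1^\infty\abs{F(x)}^p\,dx\leq(2C)^p\int_1^\infty x^{-\alpha p}\,dx<\infty$. The symmetric argument over $(-\infty,-1]$, using the convergence of $\int_{-\infty}^0\abs{t}^\alpha f(t)\,dt$ together with $F(x)=\int_{-\infty}^x f(t)\,dt$, disposes of the remaining tail. Assembling the three pieces shows $F\in L^p$, and the proposition follows.

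The integration-by-parts estimate is the step I expect to be the main obstacle. Its role is to convert the weighted integrability in (b) into genuine pointwise decay of the unweighted tail $\int_x^\infty f(t)\,dt$, and no cruder estimate will do, since the decay rate $x^{-\alpha}$ it produces is exactly what must be matched against $\alpha>1/p$. A second, more delicate point is the interpretation of ``exists'' in (b): the argument requires each one-sided improper integral to converge and the boundary terms to vanish, which is automatic under absolute convergence but must be checked when the integral is only conditionally convergent, the setting the paper is at pains to allow.
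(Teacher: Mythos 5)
Your proof is correct and shares its skeleton with the paper's: both take $F(x)=\int_{-\infty}^x f(t)\,dt$ as the primitive, observe that $F$ is continuous so only the tails matter, and convert hypothesis (b) into the pointwise decay $F(x)=O(\abs{x}^{-\alpha})$, which is exactly what $\alpha p>1$ turns into $F\in L^p$. The difference lies in the device used at the key step. The paper applies the second mean value theorem (cited from \cite{celidze}, where it is established for Denjoy--Perron integrals), writing $\int_x^\infty f(t)\,dt=\int_x^\infty t^{-\alpha}\,t^\alpha f(t)\,dt=x^{-\alpha}\int_x^\xi t^\alpha f(t)\,dt$ with the inner integral uniformly bounded by the Cauchy criterion for (b); you instead integrate by parts against the remainder $R(x)=\int_x^\infty t^\alpha f(t)\,dt$. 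The two devices are essentially interchangeable---the second mean value theorem is itself proved by your integration by parts---so the trade-off is one of packaging: the paper's step is shorter, while yours is self-contained, produces the explicit bound $\abs{F(x)}\leq 2Cx^{-\alpha}$, and makes visible why the boundary terms vanish. Your closing caveat can also be discharged without retreating to $f\in L^1_{loc}$: in the Henstock--Kurzweil reading of (a) and (b), your integration by parts remains legitimate because $t^{-\alpha}$ is monotone and bounded on $[x,\infty)$, so the multiplier (integration by parts) theorem for HK integrals applies; $R$ is still continuous with limit $0$ at infinity, and $F'=f$ in $\Sc'$ follows by the same formula paired with test functions. With that remark your argument covers the full generality the paper intends.
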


\begin{proof}
Let $F(x)=\int_{-\infty}^x f(t)\,dt$.  Then $F$ is
continuous on $\Rbar$.  Let $M>0$.  By the second mean value
theorem for integrals \cite{celidze} there is $\xi\geq M$ such that
$$
\int_{M}^\infty|F(x)|^p\,dx  =  \int_M^\infty\left|\int_x^\infty
f(t)\,dt\right|^p dx
  =  \int_M^\infty|x|^{-\alpha p}\left|\int_{x}^\xi
t^\alpha f(t)\,dt\right|^p dx,
$$
which is finite.
Similarly, $\int_{-\infty}^{-M}|F(x)|^p\,dx<\infty$.\end{proof}

Condition (a) is here interpreted as a Henstock--Kurzweil integral but
can be interpreted as a Lebesgue or wide Denjoy integral.  See
\cite{celidze}.  A sufficient condition for (b) is that
$f(t)=O(|t|^{-\beta})$ as $|t|\to\infty$ for some $\beta>1+1/p$.
For example, let $f(x)=\sin(x)/|x|$.  Then condition (a) is satisfied
as a Henstock--Kurzweil or improper Riemann integral.
From (b) we see that $f\in\Lpp$ for all $p>1$.  And, let $g(x)=
x(|x|+1)^{-\gamma}=O(|x|^{1-\gamma})$ as $|x|\to\infty$.  Condition
(a) is satisfied if $\gamma>2$.  Then $g\in\Lpp$ for all
$p>(\gamma-2)^{-1}$.

The Dirac distribution is not in $\Lpp$ since $\delta=H'$ and for no
$1\leq p<\infty$ is $H\in L^p$.  However, the Heaviside step function
is regulated, i.e., it has a left and right limit at each point. The
Dirac distribution then has a regulated primitive integral.  See
\cite{talvilaregulated}.
Differences
of translated Dirac distributions may be in $\Lpp$.  Let $F=\chi_{(a,b)}$.
Then $F\in L^p$ for each $1\leq p<\infty$.  Hence, $f=F'=\tau_a\delta
-\tau_b\delta\in \Lpp$.  Take any $1<q\leq\infty$ and let $g\in L^q$.
Define $G(x)=\int_0^xg(t)\,dt$.
Then $\intinf fG=-\int_a^bG'(x)\,dx=
-\int_a^bg(t)\,dt$.

Let $F(x)=|x|^{-\gamma}e^{-|x|}$. If $0<\gamma<1/p$ then $F\in L^p$. 
Let $f(x)=F'(x)=-\textrm{sgn}(x)(1+\gamma/|x|)|x|^{-\gamma}
e^{-|x|}$ for $x\not=0$.  Since 
$f(x)\sim -\gamma\,\textrm{sgn}(x)\,|x|^{-(\gamma+1)}$ as $x\to 0$ it follows
that $f\in\Lpp$ but $f\not\in L^1_{loc}$.  Note that $f$ is integrable
in the principal value sense.
Similarly, if $G(x)=\log\abs{x}e^{-|x|}$ then
$G'\in\Lpp$ for each $1\leq p<\infty$ and $G'(x)\sim
1/x$ as $x\to 0$.  This last is also integrable in the principal
value sense.

Let $F(x)=\sin(\exp(\abs{x}^3))/(x^2+1)$.  Then $F\in C^\infty(\RR)\cap L^p$ 
for each $1\leq p<\infty$.
Let $f(x)=F'(x)$.
Then $f\in\Lpp$ but for no
$1\leq p\leq\infty$ is $f\in L^p$.  Note that $f$ is Henstock--Kurzweil
integrable,
improper Riemann integrable, and that $f(x)=O(\exp(\abs{x}^3))$
as $\abs{x}\to\infty$.

Let $F(x)=x^2\sin(x^{-4})$ with $F(0)=0$.  Then $F\in C(\RR)\cap L^p$
for each $1\leq p<\infty$.  The derivative exists at each point
and
\begin{eqnarray*}
F'(x)=f(x) & = & \left\{\!\!\begin{array}{cl}
2x\sin(x^{-4})-4x^{-3}\cos(x^{-4}), & x\not=0\\
0, & x=0\\
\end{array}
\right.\\
 & \sim & \left\{\!\!\begin{array}{cl}
-4x^{-3}\cos(x^{-4}), & x\to 0\\
x^{-3}(2-4\cos(x^{-4}), & |x|\to\infty.
\end{array}
\right.
\end{eqnarray*}
It follows that $f$ is in each space $\Lpp$ but $\int_0^1|f(x)|^p\,dx$ diverges for each $1\leq p<\infty$
so $f$ is not in any $L^p$ space.
However, $f$ is integrable in the
Henstock--Kurzweil and improper Riemann sense.

Let $F(x)=x^{-\gamma}H(x)H(1-x)$ and suppose $0<\gamma<1/p$.  Then $F\in L^p$.
And, $F=F_1-F_2$ where
$F_1(x)=x^{-\gamma}H(x)$ and $F_2(x)=x^{-\gamma}H(x-1)$.
Define $f=F'\in \Lpp$.  To find an explicit formula for $f$ let
$\phi\in\Sc$.  Then
$$
\langle F_1',\phi\rangle  =  
-\lim_{\epsilon\to 0^+}\int_\epsilon^\infty x^{-\gamma}\phi'(x)\,dx
  =  \lim_{\epsilon\to 0+}\left[\epsilon^{-\gamma}\phi(0)-\gamma
\int_\epsilon^\infty x^{-(\gamma+1)}\phi(x)\,dx\right].
$$
And, $F_1'$ is the Hadamard finite part of the
divergent integral $\int_0^\infty x^{-(\gamma+1)}\phi(x)\,dx$.
See \cite[\S2.5]{zemanian} for details.  To find $F_2'$ let
$p_\gamma(x)=x^\gamma$.  Then
$$
\langle F_2',\phi\rangle  =  -\int_1^\infty x^{-\gamma}\phi'(x)\,dx
  =  \phi(1)-\gamma\int_1^\infty x^{-(\gamma+1)}\phi(x)\,dx.
$$
This shows that $F_2'=\tau_1\delta-\gamma(\tau_1H)p_{-(\gamma+1)}$.
Notice that the pointwise derivative of $F$ is
$F'(x)=-\gamma\, x^{-(\gamma+1)}$ for $0<x<1$, $F'(x)=0$ for
$x<0$ and $x>1$, and $F'(x)$ does not exist for $x=0,1$.
Hence, this pointwise derivative has a non-integrable singularity at
$x=0$, i.e., it is not in $L^1_{loc}$.

Let $E\subset\RR$ be a set of finite measure.  Then $\chi_E\in L^q$
for each $1\leq q\leq\infty$.  Define $g\in L^q$ by $g=\chi_E$
and define $G\in I^q$ by $G(x)=\int_0^xg(t)\,dt$.  Let $1\leq p<\infty$
with conjugate exponent $q$.  Let $f\in\Lpp$ with primitive $F\in L^p$.
Then $\intinf fG=-\int_E F$.  In particular, if $E$ is an interval with
endpoints $a<b$ then $G(x)=0$ for $x\leq a$, $G(x)=x-a$ for $a\leq x\leq b$
and $G(x)=b-a$
for $x\geq b$.  And, $G-G(a)\in I^q$ so that $\intinf fG=-\int_a^bF$.
In general, $\int_a^b f$ does not exist.

Let $\balexc$ be the functions in $C(\Rbar)$ that vanish at
$-\infty$.  Define $\alexc=\{f\in\Sc'\mid f=F' \text{ for some }
F\in\balexc\}$.  If $f\in\alexc$ then it has a unique
primitive $F\in\balexc$ such that $F'=f$.  The continuous
primitive integral of $f$ is $\int_a^b f=F(b)-F(a)$ for all $-\infty\leq a<
b\leq\infty$.  See \cite{talviladenjoy} for details.
The next two examples are distributions that have a continuous
primitive integral.

Let $\sigma\fn\RR\to[0,1]$ be a continuous function such
that $\sigma'(x)=0$ for almost all $x\in\RR$.  Define
$F(x)=\exp(-x^2)\sigma(x)$ then $F\in L^p$ for each $1\leq p<\infty$.
The pointwise derivative is $F'(x)=0$ for almost all $x\in\RR$.  For
each $1\leq q\leq \infty$, if
$\psi\in L^q$ we have the Lebesgue integral $\intinf F'(x)\psi(x)\,dx=0$.
Now define $f=F'\in\Lpp$.  Then for each $G\in I^q$ with $q$ conjugate
to $p$ we have the $L^p$ primitive integral 
$\intinf fG=-\intinf F(x)G'(x)\,dx$.  This need not be $0$.

Let $\omega\fn\RR\to\RR$ be a bounded continuous function such
that the pointwise derivative $\omega'(x)$ exists for no $x\in\RR$.  Define
$F(x)=\exp(-x^2)\omega(x)$ then $F\in L^p$ for each $1\leq p<\infty$.
The pointwise derivative $F'(x)$ exists nowhere so for no function
$\psi$ does
the Lebesgue integral $\intinf F'(x)\psi(x)\,dx$ exist.
Now define $f=F'\in\Lpp$.  Then for each $G\in I^q$ with $q$ conjugate
to $p$ we have the $L^p$ primitive integral 
$\intinf fG=-\intinf F(x)G'(x)\,dx$.  This last exists as a Lebesgue
integral.

\section{Convolution}
The convolution of functions $f$ and $g$ is
$f\ast g(x)=\intinf f(x-y)g(y)\,dy$.  In the literature, various conditions
have been imposed on $f$ and $g$ for existence of the convolution.  
For instance, 
for each $1\leq p\leq \infty$ with $q$ conjugate,
the convolution is a bounded linear operator 
$\ast\fn L^p\times L^q\to L^\infty$ such that
$f\ast g$ is uniformly continuous and
$\norm{f\ast g}_\infty\leq\norm{f}_p\norm{g}_q$.
If $1\leq p,q,r\leq\infty$ such that $1/p+1/q=1+1/r$.
Then it is a bounded linear operator
$\ast\fn L^p\times L^q\to L^r$ such that
$\norm{f \ast g}_r\leq \norm{f}_p\norm{g}_q$.
See \cite{folland} for the $L^p$ theory of convolutions.

Many of the $L^p$
results have analogues in $\Lpp$.  Use the notation ${\tilde \phi}(x)
=\phi(-x)$ for function $\phi$.  First we consider the convolution as
$\ast\fn \Lpp\times I^q\to L^\infty$.  This
can be defined directly using the integral 
$f\ast G(x)=\intinf {\widetilde{\tau_xf}}\,G$, where 
$\langle {\widetilde{\tau_xf}},\phi\rangle =\langle f, {\widetilde{\tau_x\phi}}
\rangle$ for distribution
$f$ and test function $\phi$.
We will write this as $f\ast G(x)=\intinf f(x-y)G(y)\,dy$ even though it is an
integral in $\Lpp$.  To distinguish from Lebesgue integrals, in 
this section if $f\in\Lpp$ then we will always write its primitive as
$F\in L^p$.
Further in this section we
define the convolution $\ast\fn\Lany{p}\times L^q\to\Lany{r}$ 
and $\ast\fn L^p\times \Lany{q}\to\Lany{r}$ 
using a limiting
procedure and the density of the compactly supported smooth functions
in $L^q$.

\begin{theorem}\label{theoremconvinfty}
For $1\leq p<\infty$,
let $f\in\Lpp$ with primitive $F\in L^p$.  Let $G\in I^q$ such
that $G(x)=\int_0^xg(t)\,dt$ for some $g\in L^q$, where $q$ is conjugate
to $p$.  Define the
convolution $f\ast G(x)=\intinf f(x-y)G(y)\,dy$.
The convolution has
the following properties. (a) 
$\ast\fn\Lpp\times I^q\to L^\infty$ such that
$f\ast G=F\ast g=g\ast F$,
$\norm{f\ast G}_\infty\leq \norm{f}'_p\norm{G}_{I,q}$
and $f\ast G$ is uniformly continuous on $\RR$.
(b) $f\ast k=0$ for each $k\in\RR$.
(c) Suppose that $f\in\Lonep$ and $g\in L^\infty$.
If $h\in L^1$ such that $\intinf |y\,h(y)|\,dy<\infty$
then $f\ast (G\ast h)=(f\ast G)\ast h$.
(d) For each $z\in \RR$, $\tau_z(f\ast G)=(\tau_zf)\ast G=f\ast(\tau_zG)$.
(e) For each $f\in\Lpp$ define $\Phi_f\fn I^q\to L^\infty$ by
$\Phi_f[G]=f\ast G$.  Then $\Phi_f$ is a bounded linear operator
and $\norm{\Phi_f}= \norm{f}'_p$. 
For each $G\in I^q$ define 
$\Psi_G\fn \Lpp\to L^\infty$ by
$\Psi_G[f]=f\ast G$.  Then $\Psi_G$ is a bounded linear operator.
For $1<p<\infty$ we have $\norm{\Psi_G}=\norm{G}_{I,q}$. 
For $p=1$ we have $\norm{\Psi_G}\leq\norm{G}_{I,\infty}$. 
(f) $\textrm{supp}(f\ast G)\subset \textrm{cl}(\textrm{supp}(F)+\textrm{supp}(g))$.
\end{theorem}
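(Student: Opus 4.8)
The backbone of the whole theorem is the identity asserted in (a), namely $f\ast G=F\ast g=g\ast F$; once this is in hand, essentially every remaining claim is a translation of a classical fact about the ordinary convolution of $F\in L^p$ with $g\in L^q$. The plan is therefore to establish this identity first. I would begin by showing that for each fixed $x$ the distribution $\widetilde{\tau_x f}$ again lies in $\Lpp$, with primitive the reflected translate $t\mapsto -F(-t-x)$, which is visibly in $L^p$ since reflection and translation are isometries of $L^p$. Applying Definition~\ref{defnintegral} to $\intinf\widetilde{\tau_x f}\,G$ then collapses the $\Lpp$-integral to the ordinary Lebesgue integral $\intinf F(-t-x)g(t)\,dt$, and a routine change of variables identifies this with $(F\ast g)(x)=(g\ast F)(x)$. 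With the identity established, the norm bound in (a) is exactly the classical estimate $\norm{F\ast g}_\infty\leq\norm{F}_p\norm{g}_q$, while $\norm{F}_p\norm{g}_q=\norm{f}'_p\norm{G}_{I,q}$ by the definitions of the two norms; uniform continuity follows either from the classical $L^p\times L^q$ result or, more self-containedly, from continuity in norm (Theorem~\ref{theoremLpresults}(f)) via $\abs{(f\ast G)(x+h)-(f\ast G)(x)}\leq\norm{\tau_{-h}F-F}_p\norm{g}_q$, the bound being uniform in $x$.

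Parts (b), (d) and (f) are then short. For (b), a constant function $k$ has $G'=0$, so the representing integral vanishes identically, consistent with the convention $\intinf fa=0$ of Definition~\ref{defnintegral}. For (d), translating $f=F'$ corresponds to translating its primitive, $\tau_z f=(\tau_z F)'$, and the derivative of $\tau_z G$ is $\tau_z g$; hence all three expressions reduce through (a) to the classical translation-covariance $\tau_z(F\ast g)=(\tau_z F)\ast g=F\ast(\tau_z g)$. For (f), the inclusion $\supp(f\ast G)\subset\mathrm{cl}(\supp(F)+\supp(g))$ is the standard support estimate for $F\ast g$: off the indicated closed set the integrand $F(y)g(x-y)$ vanishes for almost every $y$, so $f\ast G$ vanishes on an open neighbourhood.

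Part (c) is the one genuine interchange-of-limits argument. I would first check that $G\ast h\in I^q$, i.e.\ that $G\ast h$ is the indefinite integral of an $L^\infty$ function; differentiating under the integral gives $(G\ast h)'=g\ast h$, and the moment hypothesis $\intinf\abs{y\,h(y)}\,dy<\infty$ is precisely what controls the growth of $G\ast h$ and legitimizes this step. Both sides of the claimed equality then reduce by (a) to ordinary convolutions, and the assertion becomes the associativity $F\ast(g\ast h)=(F\ast g)\ast h$. The hypotheses $F\in L^1$, $g\in L^\infty$, $h\in L^1$ make every iterated integral absolutely convergent, so the only real work is a Fubini justification for the triple integral $\intinf\intinf F(\cdot)g(\cdot)h(\cdot)$; I expect this, rather than any algebra, to be the technical crux of (c).

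Finally, part (e) records operator norms, and the sharpness statements are where I expect the main difficulty. The inequalities $\norm{\Phi_f}\leq\norm{f}'_p$ and $\norm{\Psi_G}\leq\norm{G}_{I,q}$ are immediate from (a). For the reverse inequalities I would reproduce the extremizer construction of Proposition~\ref{eqnorms}: evaluating $f\ast G$ at a single well-chosen point reduces $\norm{\Phi_f}$ (respectively $\norm{\Psi_G}$, for $1<p<\infty$) to a supremum of $\abs{\intinf F g}$ over the unit ball, which equals $\norm{F}_p$ (respectively $\norm{g}_q$) by the H\"older duality already exploited there. The asymmetric conclusion for $\Psi_G$ when $p=1$ is exactly the familiar failure of the extremal function to exist in this duality: the $L^\infty$ extremizer for $g$ need not be realizable as an $L^1$ primitive, so only $\norm{\Psi_G}\leq\norm{G}_{I,\infty}$ survives. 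Pinning down these extremizers, and verifying that a point evaluation genuinely recovers the full operator norm, is the step I would treat most carefully.
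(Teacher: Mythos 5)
Your proposal follows essentially the same route as the paper's proof: collapse $f\ast G$ to the Lebesgue convolution $F\ast g$ through Definition~\ref{defnintegral}, import the classical $L^p\times L^q$ facts for the norm bound, uniform continuity, translation covariance, the support estimate, and associativity in (c) (after checking $G\ast h\in I^\infty$ modulo a constant by dominated convergence), and obtain the operator norms in (e) by a single point evaluation using the H\"older extremizer of Proposition~\ref{eqnorms}, with the extremizer unavailable only when $p=1$ --- all of which is exactly the paper's argument. The one slip is notational: $\widetilde{\tau_xf}$ means $\tau_x\ftilde$ (translate the reflection), whose primitive is $t\mapsto -F(x-t)$, so Definition~\ref{defnintegral} gives $\intinf F(x-t)g(t)\,dt=F\ast g(x)$ directly, whereas your reading (primitive $t\mapsto -F(-t-x)$) would produce $\intinf F(-t-x)g(t)\,dt=F\ast g(-x)$, the reflected convolution rather than $F\ast g(x)$.
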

\begin{proof}
Most parts of the theorem are proved by reverting
to equivalent $L^p$ results.  These are proved in \cite{folland}.
(a)  Note that $\tau_x(DF)=D(\tau_xF)$ so 
$f\ast G(x)=\intinf f(x-y)G(y)\,dy=\intinf F(x-y)g(y)\,dy$
by Definition~\ref{defnintegral}.
The H\"older inequality now gives $\norm{f\ast G}_\infty\leq
\norm{F}_1\norm{g}_\infty$.  Uniform continuity follows \cite[p.~241]{folland}.
(b) See Definition~\ref{defnintegral}.
(c) Note that $G\ast h$ exists on $\RR$ by the H\"older inequality since
$G(y)/(|y|+1)$ is bounded and $(|y|+1)h(y)$ is in $L^1$.  By dominated
convergence $(G\ast h)'(x)=g\ast h(x)$ for each $x\in\RR$.  Hence,
$\norm{(G\ast h)'}_\infty\leq \norm{g}_\infty\norm{h}_1$.  Thus,
$G\ast h\in I^\infty$ (modulo a constant).  
We then have $f\ast(G\ast h)=F\ast(G\ast h)'
=F\ast(g\ast h)=(F\ast g)\ast h=(f\ast G)\ast h$.  Associativity
for Lebesgue integrals is proved in \cite[p.~240]{folland}.
(d) $\tau_z(f\ast G)=\tau_z(F\ast g)=(\tau_zF)\ast g=(\tau_zf\ast G)
=F\ast(\tau_zg)=f\ast(\tau_zG)$.  Translation for functions is proved
in \cite[p~240]{folland}.
(e) Note that 
$$
\norm{\Phi_f}=\sup_{\norm{G}_{I,q}=1}\norm{f\ast G}_\infty
=\sup_{\norm{g}_q=1}\norm{F\ast g}_\infty
\leq\sup_{\norm{g}_q=1}\norm{F}_p\norm{g}_q
= \norm{F}_p.\label{Phi}
$$
We get equality in the H\"older inequality \cite[p.~46]{liebloss} by taking
$g(y)=\sgn(F(-y))$ when $p=1$ and otherwise
$g(y)=
\norm{F}_p^{1-p}\sgn{(F(-y))}|F(-y)|^{p/q}$.
Then $F\ast g(0)=\norm{F}_p$.
The proof for $\Psi$ is similar.
(f) See \cite[p.~240]{folland}.
\end{proof}

Note that if $F\in L^p$ and $G\in I^q$ then $F\ast G$ need not
exist as a function at any point.  For example, let 
$F(x)=|x|^{-4/(3p)}$
for $|x|>1$ and $F(x)=0$, otherwise.  Take $g(x)=|x|^{-(1/q+1/(3p))}$ 
for $|x|>1$ and $g(x)=0$, otherwise.  For each $1\leq p<\infty$ we
have $F\in L^p$ and $g\in L^q$.  Let $G(x)=\int_0^xg$.  Then the
Lebesgue integral defining $F\ast G(x)$ diverges for each $x\in\RR$.
Hence, we cannot define $f\ast G$ as $(F\ast G)'$. 

From \cite[p.~46]{liebloss}, we get the equality $\norm{F\ast g}_\infty=
\norm{F}_1\norm{g}_\infty$ if and only if $|g(y)|=\textrm{sgn}(F(x-y))$ for
some $x$ and almost all $y$.  Hence, in (e) the operator norm of $\Psi_G$ need
not equal $\norm{G}_{I,\infty}$.

For tempered distributions, the convolution can be defined as
$\ast\fn\Sc'\times\Sc\to C^\infty$ by 
$T\ast\phi(x)=\langle T,\tau_x\tilde{\phi}\rangle$.
See \cite{folland}, where another equivalent definition is also given.
Let $\phi\in\Sc$.
If $f\in\Lpp$ with primitive $F\in L^p$ then
according to this definition, 
\begin{eqnarray*}
f\ast \phi(x) & = & -\langle F,(\tau_x\tilde{\phi})'\rangle
=-\intinf F(y)\frac{\partial \phi(x-y)}{\partial y}\,dy\\
 & = & \intinf F(y)\phi'(x-y)\,dy=F\ast\phi'(x).
\end{eqnarray*}
Since $\Sc\subset L^q$ for each $1\leq q\leq\infty$ and $\Sc$ is closed
under differentiation, if $\Phi\in\Sc$ then the function
$x\mapsto\int_0^x\Phi'(t)\,dt$ is in $I^q$ for each $1\leq q\leq\infty$.
The definition in Theorem~\ref{theoremconvinfty} then gives
$f\ast \Phi(x)=F\ast\Phi'(x)$, since convolution with a constant is zero.
Our definition then agrees with the usual one given above when we convolve
with the integral of a test function in $\Sc$.

If $f\in\Lpp$ and $g\in L^q$ then the integral
$\intinf f(y)g(x-y)\,dy$ need not exist.  However, 
$C^\infty_c$, the set of smooth functions with compact support,
is dense in $L^q$ and we can use the
definition in Theorem~\ref{theoremconvinfty} to 
define $f\ast g$ with a
sequence in $C^\infty_c$
that converges to a function in $L^q$.
\begin{definition}\label{defnconvp}
Let $p,q,r\in[1,\infty)$ such that $1/p+1/q=1+1/r$.
Let $f\in\Lpp$ with primitive $F\in L^p$ 
and let $g\in L^q$.  Let $(g_n)\subset C^\infty_c$
such that $\norm{g_n-g}_q\to 0$.  Define $f\ast g$ to be the
unique element in $\Lany{r}$ such that $\norm{f\ast g-f\ast g_n}'_r\to 0$.
Define $F\ast g'$ to be the
unique element in $\Lany{r}$ such that $\norm{F\ast g'-F\ast g_n'}'_r\to 0$.
\end{definition}

To see that the definition makes sense, suppose $\supp(g_n)\subset
[a_n,b_n]$.  Let $x\in(\alpha,\beta)$.  Then
$(F\ast g_n)'(x)=\frac{d}{dx}\int_{\alpha-b_n}^{\beta-a_n}F(y)g_n(x-y)\,dy=
\intinf F(y)g_n'(x-y)\,dy$ by dominated convergence
since $F\in L^1_{\textrm{loc}}$ and $g_n'\in L^\infty$.  
Then, from Theorem~\ref{theoremconvinfty}(a),
$$
\norm{f\ast g_n}'_r =\norm{F\ast g_n'}'_r=\norm{F\ast g_n}_r\leq\norm{F}_p
\norm{g_n}_q.
$$
From this it follows that
$
\norm{f\ast g_n-f\ast g_m}'_r = \norm{f\ast(g_n-g_m)}'_r
\leq \norm{f}'_p\norm{g_n-g_m}_q\to 0$ as  $m,n\to\infty$.
Hence, $(f\ast g_n)$ is a Cauchy sequence in the complete space
$\Lany{r}$.  It therefore converges to an element of $\Lany{r}$ which
we label $f\ast g$.  Similarly with $(F\ast g_n')$.  
This also shows that $f\ast g$ and $F\ast g'$ are independent
of the choice of sequence $(g_n)$.

The following theorem shows that convolutions in $\Lpp\times L^q$
and $L^p\times\Lany{q}$
share many of the properties of convolutions in $L^p\times L^q$.
Part (b) extends Young's inequality \cite[p.~241]{folland}.
Part (g) shows that convolutions with $L^p$ functions can be used to approximate
distributions in $\Lpp$.

\begin{theorem}\label{theoremconvLp}
Let $p,q,r\in[1,\infty)$ such that $1/p+1/q=1+1/r$.
Let $f\in\Lpp$ with primitive $F\in L^p$ 
and let $g\in L^q$.
Then  $\ast\fn \Lpp\times L^q\to \Lany{r}$ 
and $\ast\fn L^p\times \Lany{q}\to\Lany{r}$
with the following properties.
(a) $f\ast g=(F\ast g)'=F\ast g'$.
(b) $\norm{f\ast g}'_r=\norm{F\ast g'}'_r\leq \norm{F}_p\norm{g}_q$.
(c)  If $h\in L^1$
then $f\ast (g\ast h)=(f\ast g)\ast h$ and 
$F\ast(g'\ast h)=(F\ast g')\ast h$.
(d) For each $z\in \RR$, $\tau_z(f\ast g)=(\tau_zf)\ast g=f\ast(\tau_zg)
=(\tau_zF)\ast g'=F\ast\tau_zg'$.
(e) For each $f\in\Lpp$ define $\Phi_f\fn L^q\to \Lany{r}$ by
$\Phi_f[g]=f\ast g$.  Then $\Phi_f$ is a bounded linear operator
and $\norm{\Phi_f}\leq \norm{f}'_p$. 
For each $g\in L^q$ define 
$\Psi_g\fn \Lpp\to \Lany{r}$ by
$\Psi_g[f]=f\ast g$.  Then $\Psi_g$ is a bounded linear operator
and $\norm{\Psi_g}\leq\norm{g}_q$. 
For each $F\in L^p$ define $A_F\fn \Lany{q}\to \Lany{r}$ by
$A_F[g']=F\ast g'$ for each $g\in L^q$.  Then $A_F$ is a bounded linear operator
and $\norm{A_F}\leq \norm{F}_p$. 
For each $g\in L^q$ define 
$B_{g'}\fn L^p\to \Lany{r}$ by
$B_{g'}[F]=F\ast g'$.  Then $B_{g'}$ is a bounded linear operator
and $\norm{B_{g'}}\leq\norm{g'}'_q$. 
(f) $\textrm{supp}(f\ast g)\subset \textrm{cl}(\textrm{supp}(f)+\textrm{supp}(g))$.
(g) Let $1\leq p<\infty$.  Let $g\in L^1$.  Define $F_t(x)=F(x/t)/t$ for $t>0$.
Let $a=\intinf F_t(x)\,dx=\intinf F(x)\,dx$.  Then $\norm{F_t\ast g'-ag'}'_p\to 0$
as $t\to 0$.
\end{theorem}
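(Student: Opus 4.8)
The plan is to prove everything by transferring to the classical $L^p$ theory of convolutions through part (a), the structural identity $f\ast g=(F\ast g)'=F\ast g'$ in $\Lany{r}$; once this is in hand the remaining parts are largely bookkeeping. To establish (a) I would begin from Definition~\ref{defnconvp} and the computation following it: for $g_n\in C^\infty_c$ the element $f\ast g_n$ is exactly $(F\ast g_n)'=F\ast g_n'$. Since $1/p+1/q=1+1/r$, Young's inequality gives $F\ast g\in L^r$ with $\norm{F\ast(g-g_n)}_r\leq\norm{F}_p\norm{g-g_n}_q\to0$, and because $D\fn L^r\to\Lany{r}$ is an isometry we get $\norm{(F\ast g_n)'-(F\ast g)'}'_r=\norm{F\ast(g_n-g)}_r\to0$. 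Thus the defining Cauchy sequence $(f\ast g_n)$ converges to $(F\ast g)'$, and uniqueness of limits in the complete space $\Lany{r}$ forces $f\ast g=(F\ast g)'$; the same sequence identifies $F\ast g'$ with the same limit, completing (a).

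Granting (a), parts (b)--(e) reduce to statements about ordinary Lebesgue convolutions. Part (b) is immediate from the isometry and Young's inequality, $\norm{f\ast g}'_r=\norm{F\ast g}_r\leq\norm{F}_p\norm{g}_q$. For (c) I would observe that $g\ast h\in L^q$ when $h\in L^1$, rewrite both members through (a) as derivatives of $L^p$ convolutions, and invoke the associativity $F\ast(g\ast h)=(F\ast g)\ast h$ from \cite[p.~240]{folland}, noting that $f\ast g\in\Lany{r}$ convolved with $h\in L^1$ again lands in $\Lany{r}$ since $1/r+1=1+1/r$. Part (d) follows because $\tau_z$ commutes with $D$ and with Lebesgue convolution, and part (e) reads the operator norms off (b) exactly as in Theorem~\ref{theoremconvinfty}(e).

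Part (f) is the one reduction that is \emph{not} a direct transfer: from $f\ast g=(F\ast g)'$ one only obtains $\supp(f\ast g)\subset\textrm{cl}(\supp F+\supp g)$, whereas the asserted bound uses the smaller set $\textrm{cl}(\supp f+\supp g)$, and in general $\supp f$ is a proper subset of $\supp F$ (for $F=\chi_{(a,b)}$ one has $\supp f=\{a,b\}$). I would instead identify $f\ast g$ with the distributional convolution of $f$ and $g$, which agrees with our definition on $C^\infty_c$ by the computation preceding Definition~\ref{defnconvp}, and then apply the general support theorem $\supp(S\ast T)\subset\textrm{cl}(\supp S+\supp T)$, passing to the $L^q$ limit through approximants $g_n$ supported in shrinking neighbourhoods of $\supp g$.

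Finally, part (g) carries the genuine analytic content. Here $F_t(x)=F(x/t)/t$ is the dilation of $F\in L^1$ with total mass $a=\intinf F(x)\,dx$, so it is an approximate identity. Using (a) with $F_t$ in place of $F$, and noting that the primitive of $F_t\ast g'$ is $F_t\ast g$ while the primitive of $ag'$ is $ag$, the target norm collapses to $\norm{F_t\ast g'-ag'}'_p=\norm{F_t\ast g-ag}_p$, which tends to $0$ by the standard $L^p$ approximate-identity theorem (see \cite{folland}). \textbf{The main obstacle} I anticipate is part (f): unlike the other items it cannot be obtained by the isometric transfer that drives (a)--(e), and it forces the distributional-convolution viewpoint together with a localization argument for the support of the limit; part (g), while the most substantive statement, reduces to a single citation once (a) has re-expressed the convolution through its primitive.
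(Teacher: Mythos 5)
Your proposal is correct, and for parts (a)--(e) and (g) it is essentially identical to the paper's proof: establish $f\ast g=(F\ast g)'=F\ast g'$ by showing the defining Cauchy sequence $(f\ast g_n)=(F\ast g_n')$ converges to $(F\ast g)'$ via Young's inequality and the isometry $D\fn L^r\to\Lany{r}$, then transfer (b)--(e) through this identity to the classical facts in Folland, and reduce (g) to the $L^p$ approximate-identity theorem by passing to primitives. The only genuine divergence is part (f), where you correctly isolate the key subtlety that the paper's argument is also built to handle: a naive transfer through (a) yields only $\supp(f\ast g)\subset\textrm{cl}(\supp F+\supp g)$, which is weaker since $\supp f$ can be much smaller than $\supp F$ (your example $F=\chi_{(a,b)}$, $\supp f=\{a,b\}$, is exactly right). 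The paper resolves this by showing, via a Fubini--Tonelli computation on $\langle f\ast g,\phi\rangle=-\langle F\ast g,\phi'\rangle$, that $f\ast g$ coincides with Zemanian's distributional convolution for the \emph{general} $g\in L^q$, and then citing Zemanian's differentiation and support theorems once. You instead apply the support theorem only to the approximants $f\ast g_n$ with $g_n\in C^\infty_c$ (where agreement with the $\Sc'\ast C^\infty_c$ convolution is already established before Definition~\ref{defnconvp}), choosing truncated-and-mollified $g_n$ whose supports lie in shrinking neighbourhoods of $\supp g$, and pass to the limit. This route is valid, but note it requires two small facts you should make explicit: first, that such approximants exist with $\norm{g_n-g}_q\to0$ and $\supp g_n\subset\{x:\mathrm{dist}(x,\supp g)\leq 1/n\}$; second, that convergence in $\norm{\cdot}'_r$ implies convergence in $\Sc'$ (immediate by H\"older applied to the primitives, since $\phi'\in L^{r'}$), so that $\langle f\ast g,\phi\rangle=\lim\langle f\ast g_n,\phi\rangle=0$ for any test function supported outside $\textrm{cl}(\supp f+\supp g)$. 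The paper's route handles the limit object in one stroke; yours trades the Fubini identification for a localization argument, at the cost of this extra bookkeeping.
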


\begin{proof}
(a) Take $(g_n)\subset C^\infty_c$
such that $\norm{g_n-g}_q\to 0$.  Then by Theorem~\ref{theoremconvinfty}(a)
and the paragraph following Definition~\ref{defnconvp},
$\norm{f\ast g_n-(F\ast g)'}'_r=\norm{F\ast g_n'-(F\ast g)'}'_r=
\norm{F\ast(g_n-g)}_r\leq
\norm{F}_p\norm{g_n-g}_q\to 0$ as $n\to\infty$.
(b) $\norm{f\ast g}'_r=\norm{(F\ast g)'}'_r=\norm{F\ast g'}'_r=\norm{F\ast g}_r
\leq\norm{F}_p\norm{g}_q$.
(c) $(f\ast g)\ast h=(F\ast g)'\ast h=[(F\ast g)\ast h]'=
[F\ast (g\ast h)]'=F'\ast(g\ast h)=f\ast(g\ast h)$.
The $L^p$ result used is in \cite[p.~240]{folland}.
The other case is similar.
(d) $\tau_z(f\ast g)=\tau_zD(F\ast g)=D[\tau_z(F\ast g)]
=D[(\tau_zF)\ast g]=(\tau_zF)\ast g'=[D(\tau_zF)]\ast g=(\tau_zf)\ast g
=D[F\ast\tau_zg]=f\ast \tau_zg=F\ast D(\tau_zg)=F\ast\tau_zg'$.  
The $L^p$ results used
are in \cite[p.~240]{folland}.
(e)  Note that 
$$
\norm{\Phi_f}=\sup_{\norm{g}_q=1}\norm{f\ast g}'_r
=\sup_{\norm{g}_q=1}\norm{F\ast g}_r\leq
\sup_{\norm{g}_q=1}\norm{F}_p\norm{g}_q
=\norm{F}_p.
$$
The other cases are similar.
(f) Zemanian \cite[5.4(2)]{zemanian} uses the
definition
$$
\langle F\ast g,\phi\rangle = \intinf F(y)\intinf g(x)\phi(x+y)\,dx\,dy
=\intinf F{\widetilde{g\ast \phi}}
$$
for $\phi\in\Sc$.  The integral exists for $F\in L^p$ and $g\in L^q$
since $\phi\in L^1$ so $g\ast \phi\in L^q$ (Young's inequality).
Whereas, Definition~\ref{defnconvp} and (a) of this theorem give
\begin{eqnarray*}
\langle f\ast g,\phi\rangle & = & -\langle F\ast g,\phi'\rangle
=-\intinf \intinf F(y)g(x-y)\,dy\, \phi'(x)\,dx\\
 & = & -\intinf F(y)\intinf g(x-y)\phi'(x)\,dx\,dy\\
 & = & -\intinf F(y)\intinf g(x)\phi'(x+y)\,dx\,dy.
\end{eqnarray*}
The Fubini--Tonelli theorem justifies reversing the order of integration.
This proves the equivalence of our Definition~\ref{defnconvp} with
Zemanian's definition.  The support result then follows from
Theorem~5.4-2 and Theorem~5.3-1 in \cite{zemanian} and the fact that
the support of a function is the closure of the set on which it does
not vanish.
(g) $\norm{F_t\ast g'-ag'}'_p=\norm{D(F_t\ast g-ag)}'_p
=\norm{F_t\ast g-ag}_p\to 0$ as $t\to 0$.  See \cite[p.~242]{folland}.\end{proof}

If $F,G\in L^1$ then $F\ast G\in L^1$.  Hence, we can define a Banach algebra
by defining $F'\star G'=(F\ast G)'$.
\begin{theorem}\label{banachalgebra}
Let $f,g\in \Lany{1}$ with respective primitives $F,G\in L^1$.  Define 
the product $\star\fn \Lany{1}\times\Lany{1}\to\Lany{1}$ by
$f\star g=(F\ast G)'$.  Then $\Lany{1}$ is a Banach algebra isometrically
isomorphic to
the convolution algebra on $L^1$.
\end{theorem}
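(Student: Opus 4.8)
The plan is to transport the entire algebraic structure through the isometric isomorphism already established in Theorem~\ref{theoremLp}. Recall that the map $D\fn L^1\to\Lany{1}$ given by $D(F)=F'$ is a bijective linear isometry with $\norm{D(F)}'_1=\norm{F}_1$. The strategy is to show that $D$ is simultaneously an algebra homomorphism when $L^1$ carries the convolution product $\ast$ and $\Lany{1}$ carries $\star$; since $D$ is already a linear isometric bijection, this at once exhibits it as the desired isometric algebra isomorphism and forces $\Lany{1}$ to inherit every Banach algebra property from the convolution algebra $(L^1,\ast)$.

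First I would check that $\star$ is well defined and takes values in $\Lany{1}$. By Young's inequality $F\ast G\in L^1$ with $\norm{F\ast G}_1\leq\norm{F}_1\norm{G}_1$, so $f\star g=(F\ast G)'$ really is an element of $\Lany{1}$. Because primitives in $L^1$ are unique (Theorem~\ref{theoremunique}), the product depends on no choices, and bilinearity of $\star$ follows from bilinearity of $\ast$ together with the linearity of $D$.

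Next I would verify the homomorphism identity, which does all the real work. Directly from the definition,
$$
D(F\ast G)=(F\ast G)'=F'\star G'=D(F)\star D(G),
$$
so $D$ intertwines convolution with $\star$. Associativity of $\star$ then reduces to associativity of convolution on $L^1$, since
$$
(f\star g)\star h=\big((F\ast G)\ast H\big)'=\big(F\ast(G\ast H)\big)'=f\star(g\star h),
$$
and commutativity of $\star$ follows in the same way from commutativity of $\ast$. Submultiplicativity of the norm is immediate from the isometry and Young's inequality:
$$
\norm{f\star g}'_1=\norm{F\ast G}_1\leq\norm{F}_1\norm{G}_1=\norm{f}'_1\norm{g}'_1.
$$
Together with the fact that $\Lany{1}$ is a Banach space (Theorem~\ref{theoremLp}), this shows that $(\Lany{1},\star)$ is a Banach algebra.

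There is no serious obstacle here, since every property is pulled back along $D$ from the well-known convolution algebra $(L^1,\ast)$; the steps requiring care are purely bookkeeping. The only mild subtleties are confirming that the product is genuinely independent of representatives and that the convolution of two $L^1$ primitives is again an $L^1$ primitive — both handled by the uniqueness of primitives and by Young's inequality. Assembling these observations, $D\fn(L^1,\ast)\to(\Lany{1},\star)$ is the asserted isometric algebra isomorphism.
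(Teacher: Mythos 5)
Your proposal is correct and takes essentially the same route as the paper, which simply declares the proof elementary and illustrates the transport-of-structure idea with the single example of commutativity ($f\star g=F'\star G'=(F\ast G)'=(G\ast F)'=G'\star F'=g\star f$). You have merely written out in full the bookkeeping the paper leaves implicit: that $D\fn L^1\to\Lany{1}$ is an isometric linear bijection intertwining $\ast$ with $\star$, that Young's inequality gives both well-definedness and submultiplicativity of the norm, and that associativity and commutativity pull back from the convolution algebra.
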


\begin{proof}
The proof is elementary.  For example, to show commutativity,
$f\star g=F'\star G'=(F\ast G)'=(G\ast F)'=G'\star F'=g\star f$.
\end{proof}

This product is not compatible with the convolution defined in
Theorem~\ref{theoremconvLp}.  For example, let $g(x)=-2x\exp(-x^2)$.  
Then $g\in L^1$.  Define $G(x)=\exp(-x^2)$.  Then $g(x)=G'(x)$ for
all $x$ and $G\in L^1$ so $g\in \Lany{1}$.  Let $F=\chi_{(0,1)}\in L^1$
and $f=F'=\delta_0-\delta_1\in\Lany{1}$.  A calculation shows
$f\ast g(x)=2(x-1)\exp(-[x-1]^2)-2x\exp(-x^2)$ while
$f\star g(x)=\exp(-x^2)-\exp(-[x-1]^2)$.

\section{Fourier transform in $\Lany{1}$}
If $F\in L^1$ then its Fourier transform is $\Fhat(s)=\intinf e^{-isx}F(x)\,dx$.
This Lebesgue integral converges for each $s\in\RR$.
It is known that $\Fhat$ is continuous on $\Rbar$ and vanishes at $\pm\infty$
(Riemann--Lebesgue lemma).
This defines a linear operator $\ \hat{}: L^1\to C(\Rbar)$.
Also, each tempered distribution has a Fourier transform that is also
a tempered distribution.  If $T\in\Sc'$ then $\langle \That,\phi\rangle
=\langle T,\phihat\rangle$.  The most important properties of Fourier
transforms in $L^1$ and $\Sc'$ can be found in \cite{folland}.

Since the complex exponential is in $I^\infty$ we can define the
Fourier transform
in $\Lonep$ using the integral definition.
It then shares many of the properties of $L^1$ transforms.

\begin{definition}\label{defnFourier}
Let $e_s(x)=e^{-isx}$.  Let $f\in\Lonep$ with primitive $F\in L^1$.
For each $s\in\RR$
define $\fhat(s)=\intinf f e_s$.
\end{definition}
Note that $e_s-1\in I^\infty$ since $e_s(x)=1-is\int_0^x e^{-ist}\,dt$.
The integral $\intinf f e_s$ is then well defined as in 
Definition~\ref{defnintegral}.

If $f=F'\in \Lpp$ and $G\in I^q$ for conjugate $q$ are periodic then the formula
$\int_{-\pi}^\pi F'G=-\int_{-\pi}^\pi F(x)G(x)\,dx$ can be used to
define Fourier coefficients. We will leave a study of $\Lpp$ Fourier
series for elsewhere.

\begin{theorem}\label{theoremFourier}
Let $f\in\Lonep$ with primitive $F\in L^1$.  Let $s\in\RR$.  Then
(a) $\fhat(s)=is\Fhat(s)$.
(b) $\fhat$ is continuous on $\RR$.
(c) $|\fhat(s)|= |s|\norm{\Fhat}_\infty\leq|s|\norm{F}_1=|s|\norm{f}_1'$
(d) $\fhat(s)=o(s)$ as $|s|\to\infty$.
(e) Definition~\ref{defnFourier} agrees with the tempered distribution
definition.
(f) ${\widehat {\tau_yf}}(s)=e^{-isy}\fhat(s)$ for each $y\in\RR$.
(g) Let $K(x)=xF(x)$.  If $F,K\in L^1$ then $\fhat$ is differentiable
and $D\fhat(s)=i\Fhat(s)+s{\hat K}(s)$.
(h) Let $g\in L^1$.  Then ${\widehat{f\ast g}}(s)=\fhat(s)\ghat(s)$.
(i) Let $g\in L^1$ such that the function $s\mapsto sg(s)$ is also
in $L^1$.  Then $\intinf \fhat(s) g(s)\,ds=\intinf f\ghat$.
\end{theorem}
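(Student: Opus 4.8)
The plan is to make part~(a) the cornerstone: once $\fhat(s)=is\Fhat(s)$ is established, parts~(b)--(h) reduce to standard facts about the ordinary $L^1$ Fourier transform of the primitive $F$. To prove~(a) I would note that $e_s\notin I^\infty$ but $e_s-1\in I^\infty$ with pointwise derivative $(e_s-1)'(x)=-ise^{-isx}$, so by Definition~\ref{defnintegral} together with the convention $\intinf fa=0$ for constants, $\fhat(s)=\intinf fe_s=\intinf f(e_s-1)=-\intinf F(x)(e_s-1)'(x)\,dx=is\intinf F(x)e^{-isx}\,dx=is\Fhat(s)$.

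From this identity the remaining parts are short. For~(b)--(d): $\Fhat$ is continuous and $\abs{\Fhat(s)}\leq\norm{F}_1$, so $\fhat(s)=is\Fhat(s)$ is continuous with $\abs{\fhat(s)}=\abs{s}\abs{\Fhat(s)}\leq\abs{s}\norm{F}_1=\abs{s}\norm{f}'_1$, and $\fhat(s)/s=i\Fhat(s)\to0$ as $\abs{s}\to\infty$ by the Riemann--Lebesgue lemma, giving $\fhat(s)=o(s)$. For~(e) I would invoke the distributional differentiation rule $\widehat{F'}=is\Fhat$ and compare with~(a), so that the tempered-distribution transform of $f=F'$ is exactly the regular distribution $\fhat$. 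For~(f) the primitive of $\tau_yf$ is $\tau_yF$, so~(a) and the $L^1$ translation rule $\widehat{\tau_yF}(s)=e^{-isy}\Fhat(s)$ give $\widehat{\tau_yf}(s)=e^{-isy}\fhat(s)$. For~(g) I would differentiate $\fhat(s)=is\Fhat(s)$, using that $K=xF\in L^1$ justifies $D\Fhat(s)=-i\hat K(s)$ and hence $D\fhat(s)=i\Fhat(s)+s\hat K(s)$. For~(h), Theorem~\ref{theoremconvLp}(a) identifies the primitive of $f\ast g$ as $F\ast g\in L^1$, so~(a) with the $L^1$ convolution theorem yields $\widehat{f\ast g}(s)=is\Fhat(s)\ghat(s)=\fhat(s)\ghat(s)$.

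The substantive step is~(i). First I would check that $\intinf f\ghat$ is even meaningful: since $s\mapsto sg(s)\in L^1$, differentiation under the integral sign gives $\ghat'(s)=-i\intinf xg(x)e^{-isx}\,dx$, which is continuous and bounded, so $\ghat-\ghat(0)\in I^\infty$ and the integral is defined as in Definition~\ref{defnintegral}. Then $\intinf f\ghat=-\intinf F(s)\ghat'(s)\,ds=i\intinf F(s)\left(\intinf xg(x)e^{-isx}\,dx\right)ds$. The key move is to interchange the order of integration, and this is where I expect the only real work: it is justified by Fubini--Tonelli because $F\in L^1$, $xg\in L^1$, and $\abs{e^{-isx}}=1$. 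After swapping I obtain $i\intinf xg(x)\Fhat(x)\,dx=\intinf\bigl(ix\Fhat(x)\bigr)g(x)\,dx$, which by part~(a) equals $\intinf\fhat(s)g(s)\,ds$, establishing the identity $\intinf\fhat(s)g(s)\,ds=\intinf f\ghat$.
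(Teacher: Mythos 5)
Your proposal is correct and takes essentially the same route as the paper: part (a) is read off from Definition~\ref{defnFourier} and Definition~\ref{defnintegral} via $e_s-1\in I^\infty$, parts (b)--(d) and (f)--(h) revert to the standard $L^1$ theorems for $\Fhat$ exactly as the paper does (including the use of Theorem~\ref{theoremconvLp}(a) for (h)), and (i) is the same Fubini--Tonelli interchange the paper performs, with your justification that $\ghat-\ghat(0)\in I^\infty$ being, if anything, more careful than the paper's. The only presentational difference is in (e), where you invoke the standard tempered-distribution rule $\widehat{T'}=is\That$ rather than verifying it by the paper's explicit dominated-convergence and Fubini computation; this is the same argument packaged as a known fact.
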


\begin{proof}
Most parts are proved by reverting to an $L^1$ result.
For these, see \cite[\S8.3]{folland}. (e) If $\phi\in\Sc$ then
$\phihat\in\Sc$.  Using the tempered distribution definition,
\begin{align}
&\langle \fhat,\phi\rangle = \langle F',\phihat\rangle
=-\langle F,D\phihat\rangle
=i\intinf F(x)\intinf se^{-ixs}\phi(s)\,ds\,dx\label{FourierLDC}\\
&=i\intinf s\phi(s)\intinf e^{-ixs}F(s)\,dx\,ds
=i\intinf s\phi(s)\Fhat(s)\,ds,\label{FourierFubini}
\end{align}
in agreement with Definition~\ref{defnFourier}.  
Dominated convergence is used in \eqref{FourierLDC} to differentiate
under the integral.  The Fubini--Tonelli theorem allows interchange
of iterated integrals in \eqref{FourierFubini}.
(f) Note that $(\tau_yF)'=\tau_y(F')$.  Then
$\widehat{\tau_yf}(s)=is\,\widehat{\tau_yF}(s)=ise^{-isy}\Fhat(s)=
e^{-isy}\fhat(s)$.
(h) Use Theorem~\ref{theoremconvLp}(a) with $p=q=r=1$ to get
$\widehat{f\ast g}(s)=
\widehat{(\!F\!\!\ast\!g\!)'\,}\!(s)
=is\,\widehat{F\ast g}(s)
=is\Fhat(s)\ghat(s)=\fhat(s)\ghat(s)$.
(i) Note that 
\begin{equation}
\intinf\fhat(s) g(s)\,ds=i\intinf \intinf se^{-isx}F(x)g(s)
\,dx\,ds\label{exchange1}
\end{equation}
and 
\begin{equation}
\intinf f\ghat=-\intinf F(x)\ghat\,'(x)\,dx=
i\intinf\intinf F(x)e^{-isx}sg(s)\,ds\,dx.\label{exchange2}
\end{equation}
The Fubini--Tonelli theorem
gives the equality of the integrals in \eqref{exchange1} and
\eqref{exchange2}.\end{proof}

Thus, $\ \hat{}:\Lonep\to C(\RR)$.  The Riemann--Lebesgue lemma
order relation $\Fhat(s)=o(s)$
as $|s|\to\infty$ is known to be sharp for $F\in L^1$.  Hence, $\fhat$
is a continuous function but
need not be bounded.

\section{Inner product and Fourier transform in $\Lany{2}$}\label{sectionL2}
An important property of the $L^p$ spaces is that $L^2$ is a Hilbert space
with inner product $(f,g)=\intinf f(x)g(x)\,dx$.
If $f,g\in\Lany{2}$ then $\intinf fg$ need not exist.  However,
since the derivative $D\fn L^2\to \Lany{2}$ is a linear isometry,
$\Lany{2}$ is also a Hilbert space.
\begin{theorem}\label{theoremHilbert}
$\Lany{2}$ is a Hilbert space with inner product
$$
(f,g)=\frac{1}{4}\left({\norm{f+g}'_2}^2-{\norm{f-g}'_2}^2\right)
=(F,G),
$$
where $f,g\in\Lany{2}$ with primitives $F,G\in L^2$.
\end{theorem}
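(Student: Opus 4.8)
The plan is to transport the Hilbert space structure of $L^2$ to $\Lany{2}$ through the isometric isomorphism $D\fn L^2\to\Lany{2}$ established in Theorem~\ref{theoremLp}. First I would define a candidate inner product on $\Lany{2}$ directly by $(f,g):=(F,G)=\intinf F(x)G(x)\,dx$, where $F,G\in L^2$ are the primitives of $f,g$. Since $D$ is a linear bijection and $(\cdot,\cdot)$ is an inner product on the real Hilbert space $L^2$, bilinearity, symmetry and positive-definiteness transfer immediately; for instance $(f,f)=(F,F)=\norm{F}_2^2\geq 0$, with equality only when $F=0$, i.e.\ $f=0$.

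Next I would verify that this inner product induces exactly the norm $\norm{\cdot}'_2$ and that $\Lany{2}$ is complete for it. By definition $\sqrt{(f,f)}=\sqrt{(F,F)}=\norm{F}_2=\norm{f}'_2$, so the inner product norm agrees with the Banach space norm already in use. Completeness is not a new issue, since Theorem~\ref{theoremLp}(a) already gives that $\Lany{2}$ is a Banach space under $\norm{\cdot}'_2$. Hence $\Lany{2}$ is complete with respect to the inner product norm and is therefore a Hilbert space.

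Finally I would reconcile the formula in the statement with this inner product. By the definition of addition in $\Lany{2}$ from Theorem~\ref{theoremLp}, the primitive of $f+g$ is $F+G$ and the primitive of $f-g$ is $F-G$, so $\norm{f+g}'_2=\norm{F+G}_2$ and $\norm{f-g}'_2=\norm{F-G}_2$. The real polarization identity in $L^2$ states $(F,G)=\frac{1}{4}\left(\norm{F+G}_2^2-\norm{F-G}_2^2\right)$, and substituting the two preceding equalities yields the displayed expression. This shows the polarization formula and the quantity $(F,G)$ coincide, so both equalities in the theorem hold at once. I do not anticipate a genuine obstacle here, as the argument is a pure transport of structure; the only point requiring care is the observation that $D$ respects sums, so that the primitives of $f\pm g$ are $F\pm G$, which is immediate from the linear structure imposed on $\Lany{2}$.
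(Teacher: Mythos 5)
Your proof is correct, but it follows a different logical skeleton than the paper's. The paper invokes the Jordan--von Neumann characterization (cited to Yosida, I.5): a Banach space is a Hilbert space if and only if its norm satisfies the parallelogram identity $\norm{x+y}^2+\norm{x-y}^2=2(\norm{x}^2+\norm{y}^2)$. Since $\norm{\cdot}'_2$ inherits this identity from $L^2$ through the isometry, the Hilbert structure exists automatically, and the inner product is \emph{defined} by the polarization formula in the statement; the paper then simply computes that this formula equals $(F,G)$. You instead construct the inner product directly by transport of structure, setting $(f,g):=(F,G)$, verify the axioms and that the induced norm is $\norm{\cdot}'_2$, reuse the completeness already established in Theorem~\ref{theoremLp}(a), and only afterwards check that polarization in $L^2$ recovers the displayed identity. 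Your route is more self-contained, needing only the elementary polarization identity rather than the nontrivial characterization theorem; the paper's route is shorter because the citation carries the weight, and it makes the polarization expression the primary definition rather than a derived consequence. Both arguments rest on the same foundation -- the linear isometry $D\fn L^2\to\Lany{2}$ and the uniqueness of primitives -- and your reconciliation step (that the primitive of $f\pm g$ is $F\pm G$) is exactly the point where the two proofs meet.
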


\begin{proof}
A necessary and sufficient condition for a Banach space
to be a Hilbert space is that the norm satisfy the parallelogram
identity $\norm{x+y}^2+\norm{x-y}^2=2(\norm{x}^2+\norm{y}^2)$ for
all $x$ and $y$ in the space.  See \cite[I.5]{yosida}.  
The $L^2$ norm then satisfies this
identity and so does $\norm{\cdot}'_2$.  If $f,g\in\Lany{2}$ with
respective primitives $F,G\in L^2$
then 
$$
(f,g)  =  \frac{1}{4}\left({\norm{f+g}'_2}^2-{\norm{f-g}'_2}^2\right)
  =  \frac{1}{4}\left(\norm{F+G}_2^2-\norm{F-G}_2^2\right)
  =  (F,G).
$$
This completes the proof.\end{proof}

Using Theorem~\ref{theoremHilbert} we can define $\intinf fg=
(f,g)=\intinf F(x)G(x)\,dx$ for $f,g\in\Lany{2}$ with primitives
$F,G\in L^2$.

Fourier transforms are defined in $L^2$ using the Plancherel theorem.
If $F\in L^2$ let $(F_n)\subset L^2\cap L^1$ such that $\norm{F_n-F}_2
\to 0$.  Each function $\Fnhat$ is well defined by the usual Fourier
integral formula as an element of $L^2$.
There is a unique function $\Fhat\in L^2$ such that
$\norm{\Fnhat - \Fhat}_2\to 0$.  It follows that the Fourier transform
$\ \hat{}: L^2\to L^2$ is a unitary isomorphism, $\norm{F}_2=\norm{\Fhat}_2$
and if $G\in L^2$ then the Parseval equality is 
$(F,G)=\intinf F(x)G(x)\,dx=[1/(2\pi)]\intinf \Fhat(s)\Ghat(s)\,ds=
(\Fhat, \Ghat)$.  The norm is as usual defined by $\norm{\Fhat}_2=
(\Fhat,\Fhat)^{1/2}$.
See \cite{folland}
for details.  If $F\in L^2$ then the defining sequence in $L^2\cap L^1$
can be taken
as $F_n=\chi_{(-n,n)}F$.

The  derivative is a unitary mapping between $L^2$ and $\Ltwop$.
Suppose $F\in L^2$, $(F_n)\subset L^2$ such that $\norm{F_n-F}_2\to 0$.
Then $\Fhat\in L^2$ and $D\Fhat\in\Ltwop$ is defined as a tempered
distribution by
$
\langle D\Fhat,\phi\rangle=-\langle\Fhat,\phi'\rangle=-\langle F,
\widehat{D\phi}\rangle$
for all $\phi\in\Sc$.
And, if $G\in I^2$ then $\intinf (D\Fhat)G=-\intinf \Fhat(s)G'(s)\,ds$.
Note that $\norm{F_n-F}_2=\norm{\Fnhat-\Fhat}_2=\norm{D\Fnhat-D\Fhat}'_2\to 0$.
If $g\in L^2$ the Parseval equality takes the form
$(F,g)=\intinf F(x)g(x)\,dx=(\Fhat,\ghat)=
[1/(2\pi)]\intinf \Fhat(s)\ghat(s)\,ds=
(D\Fhat, D\ghat)=[1/(2\pi)]\intinf(D\Fhat)(D\ghat)
=(F',g')=\intinf F'g'$.

If $F\in L^2$ then $DF\in \Ltwop\subset\Sc'$ so $\widehat{DF}$ is
defined as a tempered distribution by
$
\langle \widehat{DF},\phi\rangle=\langle DF,\phihat\rangle=-\langle
F,D\phihat\rangle.
$

Since the Fourier transform does not commute with the derivative,
$D\Fhat$ and $\widehat{DF}$ are not necessarily equal.
For example, let $F=\chi_{(-1,1)}\in L^2\cap L^1$.  Then
$\Fhat\in L^2$ and $D\Fhat\in\Ltwop$.  We have
$$
\Fhat(s)=2\int_0^1\cos(sx)\,ds=\left\{\!\!\begin{array}{cl}
\frac{2}{s}\sin(s), & s\not=0\\
2, & s=0.
\end{array}
\right.
$$
The pointwise derivative is then
$$
D\Fhat(s)=\left\{\!\!\begin{array}{cl}
-\frac{2}{s^2}\sin(s)+\frac{2}{s}\cos(s), & s\not=0\\
0, & s=0.
\end{array}
\right.
$$
Whereas, $DF=\tau_{-1}\delta-\tau_1\delta\in \Lonep$ so
$
\widehat{DF}(s)=is\Fhat(s)=2i\sin(s).$  Hence, $D\Fhat\not=\widehat{DF}$.
Note that $\widehat{DF}\in L^\infty$ but is not in any $L^p$ or $\Lpp$ space for
$1\leq p<\infty$.

\section{Higher derivatives}\label{sectionhigher}
In \cite{talvilaacrn}, an integral that inverts higher derivatives of
continuous or regulated primitives was introduced.  Define
$\acn=\{f\in\Sc'\mid f=D^nF \text{ for some } F\in\balexc\}$ where
$\balexc=\{F\in C(\Rbar)\mid F(-\infty)=0\}$ and
$\arn=\{f\in\Sc'\mid f=D^nF \text{ for some } F\in\balexr\}$
where $\balexr$ are the regulated and left continuous functions on $\Rbar$ 
with limit $0$ at $-\infty$.  If
$f\in\acn$ or $f\in\arn$ for integer $n\in\NN$ then $\intinf fG$
exists if $G$ is an $n$-fold iterated integral of a function of 
bounded variation. If $f\in\acn$ or $\arn$ with primitive $F\in\balexc$ 
or $\balexr$ then
$\norm{f}_{a,n}=\norm{F}_\infty$ makes $\acn$ and $\arn$ into Banach spaces
that are isometrically isomorphic to $\Bc$ and $\Br$, respectively.
We also write $D^nF=F^{(n)}$.

Define $\Lnp=\{f\in\Sc'\mid f=D^nF \text{ for some } F\in L^p\}$
and denote by
$\Inq$ the functions $G\fn\RR\to\RR$ with 
$G(x)=\int_0^x\cdots\int_0^{x_{i+1}}\cdots
\int_0^{x_2}g(x_1)\,dx_1\cdots dx_i\cdots\,dx_n$ for some  $g\in L^q$.
Let $q$ be the
conjugate exponent of $p$. 
Then $\intinf fG=(-1)^n\intinf F(x)G^{(n)}(x)\,dx$ if $G\in\Inq$.
It follows that for each $0\leq m\leq n$,
$\intinf F^{(n)}G=(-1)^m\intinf F^{(n-m)}G^{(m)}$.

Results for the $\Lnp$ spaces are
analogous to those of $\Lpp =L\!^{(1),p}$.   Changing the lower limits in the
integrals defining a function $G\in\Inq$ changes $G$ by the addition
of a polynomial of degree at most $n-1$.  We define $\intinf fP=0$
for $f\in\Lnp$ and $P$ a polynomial of degree at most $n-1$.  
Theorem~\ref{theoremunique} holds without change and we define
$\norm{f}^{(n)}_p=\norm{F}_p$ where $f\in\Lnp$ and $F$ is its unique
primitive in $L^p$.  If $G\in\Inq$ is an $n$-fold iterated integral
of $g\in L^q$,  define $\norm{G}_{nI,q}=\norm{G^{(n)}}_q=\norm{g}_q$.  Then
$\Lnp$ is a Banach space with norm $\norm{\cdot}^{(n)}_p$ isometrically
isomorphic to $L^p$ and $\Inq$ is a Banach space with norm 
$\norm{\cdot}_{nI,q}$ isometrically
isomorphic to $L^q$.  The results of Theorem~\ref{theoremLp} hold
with these changes.  It then follows that the remaining  theorems
in Section~\ref{sectionLp} hold with minor changes. 

The convolution is
$f\ast G(x)=\intinf F^{(n)}(x-t)G(t)\,dt=
F\ast G^{(n)}(x)$ for $f\in L\!^{(n),p}$ and $G\in I^{n,q}$ and
$q$ conjugate to $p$.  
Analogous properties to Theorem~\ref{theoremconvinfty} can
now be seen to hold in $L\!^{(n),p}$.  
The second integral condition in (c) is replaced by
$\intinf |y^n\,h(y)|\,dy<\infty$.  Convolutions are defined in 
$L\!^{(n),1}\times L^p$ and in $L^1\times L\!^{(n),p}$ by replacing Definition~\ref{defnconvp} with
\begin{definition}\label{convn1}
Let $p,q,r\in[1,\infty)$ such that $1/p+1/q=1+1/r$.
Let $f\in L\!^{(n),p}$ with primitive $F\in L^p$ 
and let $g\in L^q$.  Let $(g_k)\subset L^q\cap I^{(n,q)}$
such that $\norm{g_k-g}_q\to 0$.  Define $f\ast g$ to be the
unique element in $L\!^{(n),r}$ such that 
$\norm{f\ast g-f\ast g_k}^{(n)}_r\to 0$.
Define $F\ast g^{(n)}$ to be the
unique element in $L\!^{(n),r}$ such that 
$\norm{F\ast g^{(n)}-F\ast g_k^{(n)}}^{(n)}_r\to 0$.
\end{definition}

Theorem~\ref{theoremconvLp} now holds with minor changes.

Using Definition~\ref{defnFourier}, the Fourier transform of 
$f\in L\!^{(n),1}$ with primitive $F\in L^1$ is $\fhat(s)=\intinf fe_s
=(is)^n\Fhat(s)$.  Then $\fhat$ is continuous with
$|\fhat(s)|= |s^n|\norm{\Fhat}_\infty\leq|s^n|\norm{F}_1=|s^n|\norm{f}_1^{(n)}$.
The Riemann--Lebesgue lemma becomes $\fhat(s)=o(s^n)$ as $|s|\to\infty$.
The rest of Theorem~\ref{theoremFourier} holds with minor changes.
In (i) the second condition on $g$ is that the function
$s\mapsto s^ng(s)$ is in $L^1$.

The space $L\!^{(n),2}$ is a Hilbert space with inner product
defined as per Definition~\ref{theoremHilbert}:
$(f,g)=(F,G)=\intinf F(x)G(x)\,dx$ where $f,g\in L\!^{(n),2}$ with
respective primitives $F,G\in L^2$.  The Fourier transform in
$L\!^{(n),2}$ is defined similarly and the Parseval equality
continues to hold.

Finally, we have the connection between $L\!^{(n),1}$ and $\alexc^m$.
\begin{prop}
Let $n\in\NN$. (a) $L\!^{(n),1}$ is a subspace of $\alexc^{n+1}$.  (b)  
It is not
closed.
(c) The norms $\norm{\cdot}^{(n)}_1$ and $\norm{\cdot}_{a,n+1}$ are
not equivalent.
\end{prop}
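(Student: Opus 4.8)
The plan is to reduce the whole proposition to a comparison of two norms on $L^1$: the ordinary $L^1$ norm and the supremum of the left-hand indefinite integral. First I would prove (a). Given $f\in L\!^{(n),1}$, write $f=D^nF$ with $F\in L^1$ and set $\Phi(x)=\int_{-\infty}^xF(t)\,dt$. Since $F\in L^1$, $\Phi$ is absolutely continuous, has finite limits at $\pm\infty$, and satisfies $\Phi(-\infty)=0$, so $\Phi\in\balexc$. As $D\Phi=F$ we get $f=D^{n+1}\Phi\in\alexc^{n+1}$, and closure under linear combinations is immediate from linearity of $D$. Thus $L\!^{(n),1}$ is a linear subspace of $\alexc^{n+1}$.

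The crucial intermediate step — and the one I expect to be the \emph{main obstacle} — is identifying the $\alexc^{n+1}$ primitive of such an $f$ precisely, so that the higher-order indices collapse. If $\Phi\in\balexc$ satisfies $D^{n+1}\Phi=D^nF$, then $D^n(\Phi'-F)=0$, so $\Phi'-F$ is a polynomial $P$ of degree at most $n-1$. But $\Phi$ has finite limits at $\pm\infty$, and since $\int_0^xF$ converges as $x\to+\infty$ while $\int_0^xP$ converges only if $P\equiv0$ (a nonzero polynomial is not integrable on a half-line), we must have $P\equiv0$. Hence $\Phi'=F$ and, using $\Phi(-\infty)=0$, $\Phi(x)=\int_{-\infty}^xF$. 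Consequently the two norms reduce, independently of $n$, to
$$\norm{f}^{(n)}_1=\norm{F}_1=\intinf|F(x)|\,dx,\qquad \norm{f}_{a,n+1}=\norm{\Phi}_\infty=\sup_{x\in\RR}\left|\int_{-\infty}^xF(t)\,dt\right|.$$
Once this reduction is in place, (c) and (b) are short.

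For (c), the one-sided bound $\norm{f}_{a,n+1}\le\norm{f}^{(n)}_1$ always holds. To defeat the reverse inequality I would take $F_k(x)=\sin(kx)\chi_{[0,2\pi]}(x)$, so that $\norm{F_k}_1=4$ while $\int_{-\infty}^xF_k=(1-\cos(kx))/k$ on $[0,2\pi]$ and vanishes outside, giving $\norm{\Phi_k}_\infty\le 2/k\to0$. Writing $f_k=D^nF_k$, the ratio $\norm{f_k}^{(n)}_1/\norm{f_k}_{a,n+1}\ge 2k\to\infty$, so the norms are not equivalent.

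Finally, (b) follows from (a) and (c) by the bounded inverse theorem. If $L\!^{(n),1}$ were closed in the Banach space $(\alexc^{n+1},\norm{\cdot}_{a,n+1})$ it would itself be complete in $\norm{\cdot}_{a,n+1}$; it is also complete in $\norm{\cdot}^{(n)}_1$, being isometric to $L^1$. The identity map from the latter to the former is a continuous bijection between Banach spaces, so its inverse would be bounded, forcing the two norms to be equivalent and contradicting (c). Alternatively one argues directly: take $\Phi$ a normalised Cantor--Lebesgue function in $\balexc$ and polygonal approximants $\Phi_k\in\balexc$ with $\Phi_k'\in L^1$ and $\Phi_k\to\Phi$ uniformly; then $D^{n+1}\Phi_k\in L\!^{(n),1}$ converges in $\alexc^{n+1}$ to $D^{n+1}\Phi$, which lies outside $L\!^{(n),1}$ since $\Phi$ is not absolutely continuous.
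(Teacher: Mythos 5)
Your proof is correct, and parts (a) and (c) follow the paper's own argument almost verbatim: the same primitive $\Phi(x)=\int_{-\infty}^x F(t)\,dt$ in (a), and the same witnesses $F_k(x)=\sin(kx)\chi_{(0,2\pi)}(x)$ with $\norm{F_k}_1=4$ and $\norm{\Phi_k}_\infty=2/k$ in (c). Part (b) is where you genuinely diverge. The paper disposes of (b) in one line by citing the density of $AC(\Rbar)$ in $\balexc$ (from an earlier paper of the author): under the isometries, $L^{(n),1}$ is then a dense proper subspace of $\alexc^{n+1}$, hence not closed. You instead deduce (b) from (a) and (c) by the bounded inverse theorem --- closedness would make $L^{(n),1}$ complete in both norms, and the continuous bijective identity map from $(L^{(n),1},\norm{\cdot}^{(n)}_1)$ to $(L^{(n),1},\norm{\cdot}_{a,n+1})$ would then have a bounded inverse, forcing equivalence of the norms and contradicting (c) --- and you also offer a concrete Cantor--Lebesgue approximation as an alternative. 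Both routes are sound; yours is self-contained (no external density citation) at the cost of invoking the open mapping theorem, while the paper's is shorter but leaves implicit both the density fact and the properness of the subspace. A further merit of your write-up is the intermediate lemma identifying the unique $\balexc$-primitive of $f=D^nF$ as exactly $\int_{-\infty}^x F(t)\,dt$, the possible polynomial discrepancy being killed by integrability at $+\infty$; this is stated nowhere in the paper, yet it is precisely what justifies the paper's computation of $\norm{f_m}_{a,n+1}$ in (c), and in your alternative argument for (b) it is what certifies that $D^{n+1}\Phi$, with $\Phi$ the Cantor function, lies outside $L^{(n),1}$.
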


\begin{proof}
(a) If $f\in L\!^{(n),1}$ then there is $F\in L^1$ such that
$f=F^{(n)}$.  Let $G(x)=\int_{-\infty}^xF(t)\,dt$.  Then $G\in AC(\Rbar)
\subset\balexc$.  For almost all $x\in\RR$ we have $G'(x)=F(x)$ so
$G^{(n+1)}=F^{(n)}$ and $f\in\alexc^{n+1}$.
(b) It is not closed since $AC(\Rbar)$ is dense in $\balexc$.  See
\cite[Proposition~3.3]{talvilaconv}.
(c) Let $F_m(x)=\sin(mx)\chi_{(0,2\pi)}(x)$.  
Let $f_m=F_m^{(n)}\in L\!^{(n),1}$.
Then $\norm{f_m}^{(n)}_1=\norm{F_m}_1=\int_0^{2\pi}|\sin(mx)|\,dx=
4$.  And, $\norm{f_m}_{a,n+1}=\sup_{0\leq x\leq 2\pi}|\int_0^x\sin(mt)\,dt|
=\int_0^{\pi/m}\sin(mt)\,dt=2/m\to 0$ as $m\to\infty$.  The two norms
are then not equivalent.\end{proof}

For no $1<p<\infty$ is $L\!^{(n),p}$ a subset of any of the $\alexc^m$ spaces
since for each $1<p<\infty$ there is a function $F\in L^p$  for which
$\int_0^x F(t)\,dt$ is not bounded.

\section{Half plane Poisson integral}\label{sectionpoisson}
As an application we show the half plane Poisson integral can be
defined for distributions in $\Lnp$ and has essentially the same
behaviour with respect to $\norm{\cdot}^{(n)}_p$ as the Poisson
integral of $F\in L^p$ has with $\norm{\cdot}_p$.  The upper half plane is
$\Pi^+=\{(x,y)\in\RR^2\mid y>0\}$.  The Poisson kernel is
$\Phi_y(x)=(y/\pi)(x^2+y^2)^{-1}$.  If $F\in L^p$ define
$U_y(x)=\Phi_y\ast F(x)=
(y/\pi)\int_{-\infty}^\infty F(\xi)[(x-\xi)^2+y^2]^{-1}\,d\xi$.  The following
results are well known.
\begin{theorem}\label{theorempoissonLp}
Let $1\leq p<\infty$.  Let $F\in L^p$.  For each $(x,y)\in\Pi^+$ define 
$U_y(x)=\Phi_y\ast F(x)$. (a) $U_y$ is harmonic in $\Pi^+$.
(b) $\norm{U_y}_p\leq\norm{F}_p$
(c) $\norm{U_y-F}_p\to 0$ as $y\to0^+$.
\end{theorem}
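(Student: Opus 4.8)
The plan is to establish all three parts from standard properties of the Poisson kernel, since these are the classical facts about harmonic extension to the half plane.

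For (a), I would first note that the kernel is harmonic in the pair $(x,y)\in\Pi^+$: writing $\Phi_y(x)=\pi^{-1}\mathrm{Im}\,(x-iy)^{-1}$ exhibits it as the imaginary part of a function harmonic in $(x,y)$, or one checks $(\partial_x^2+\partial_y^2)\Phi_y(x)=0$ by direct computation. To conclude that $U_y$ is harmonic I would differentiate under the integral sign, moving the Laplacian onto the kernel. This is justified by dominated convergence: on any compact subset of $\Pi^+$ the functions $\xi\mapsto\Phi_y(x-\xi)$ together with their first and second $(x,y)$-derivatives are bounded by a single $L^q$ function of $\xi$, so the pairing against $F\in L^p$ is controlled by the H\"older inequality. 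Then $\Delta U_y(x)=\intinf(\Delta\Phi_y)(x-\xi)F(\xi)\,d\xi=0$.

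For (b), the essential computation is $\norm{\Phi_y}_1=\pi^{-1}\intinf y(x^2+y^2)^{-1}\,dx=1$ for every $y>0$, evaluated with the arctangent antiderivative. Young's inequality for $L^1\ast L^p\to L^p$, recorded at the start of the convolution section (take $q=1$ so that $r=p$), then gives at once $\norm{U_y}_p=\norm{\Phi_y\ast F}_p\leq\norm{\Phi_y}_1\norm{F}_p=\norm{F}_p$.

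For (c), I would exploit the approximate-identity structure of the family. Writing $\Phi_y(x)=y^{-1}\phi(x/y)$ with $\phi(x)=\pi^{-1}(1+x^2)^{-1}$, one has $\Phi_y\geq 0$, $\intinf\Phi_y=1$, and $\int_{\abs{t}\geq\delta}\Phi_y(t)\,dt\to 0$ as $y\to 0^+$ for each fixed $\delta>0$. Using $\intinf\Phi_y=1$ I would write $U_y(x)-F(x)=\intinf\Phi_y(t)[F(x-t)-F(x)]\,dt$ and apply Minkowski's integral inequality to obtain $\norm{U_y-F}_p\leq\intinf\Phi_y(t)\norm{\tau_tF-F}_p\,dt$. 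Splitting the $t$-integral at $\abs{t}=\delta$, the inner piece is made small by continuity of translation in the $L^p$ norm (choose $\delta$ so that $\norm{\tau_tF-F}_p<\epsilon$ whenever $\abs{t}<\delta$), while the outer piece is bounded by $2\norm{F}_p\int_{\abs{t}\geq\delta}\Phi_y$, which tends to $0$ as $y\to0^+$. Parts (a) and (b) are routine; the step requiring the most care is (c), where Minkowski's integral inequality, $L^p$ continuity of translation, and the tail decay of $\Phi_y$ must be combined in the right order. As these are the classical $L^p$ Poisson facts, an acceptable alternative is simply to cite \cite{folland} for all three.
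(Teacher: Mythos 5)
Your proof is correct, but it takes a different route from the paper in the most literal sense: the paper does not prove Theorem~\ref{theorempoissonLp} at all, stating only that the results are well known and citing \cite{axler}. Your final remark---that citing the literature is an acceptable alternative---is in fact exactly what the paper does. The body of your proposal supplies the standard self-contained argument, and each step is sound: (a) harmonicity of the kernel (e.g.\ via $\Phi_y(x)=\pi^{-1}\,\mathrm{Im}\,(x-iy)^{-1}$ or direct computation) passes to $U_y$ by differentiation under the integral, justified on compact subsets of $\Pi^+$ by dominating the $(x,y)$-derivatives of $\xi\mapsto\Phi_y(x-\xi)$ by a fixed $L^q$ function and applying H\"older; (b) is Young's inequality with $q=1$, $r=p$, together with $\norm{\Phi_y}_1=1$; (c) is the approximate-identity argument, where the scaling $\Phi_y(x)=y^{-1}\phi(x/y)$, Minkowski's integral inequality, continuity of translation in $L^p$ (valid since $p<\infty$), and the tail estimate $\int_{\abs{t}\geq\delta}\Phi_y\to 0$ combine in the standard way. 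What your approach buys is a proof that makes visible exactly which properties of the Poisson kernel are used (positivity, unit mass, concentration near $0$, decay of derivatives), at the cost of length; what the paper's citation buys is brevity, which is reasonable given that the theorem is stated only as classical background for the new result, Theorem~\ref{theorempoissonLnp}, whose proof reduces the $\Lnp$ statements to the $L^p$ ones recorded here.
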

For a proof see, for example, \cite{axler}.

We have the following analogue for distributions that are the $n$th
derivative of an $L^p$ function.
\begin{theorem}\label{theorempoissonLnp}
Let $1\leq p<\infty$.  Let $F\in L^p$. For each $(x,y)\in\Pi^+$ define 
$U_y(x)=\Phi_y\ast F(x)$.  Let $f=F^{(n)}\in\Lnp$. (a) For
each $(x,y)\in\Pi^+$ the integral $u_y(x)=\Phi_y\ast f(x)=
(y/\pi)\int_{-\infty}^\infty f(\xi)[(x-\xi)^2+y^2]^{-1}\,d\xi$ is well defined.
(b) $u_y$ is harmonic in $\Pi^+$.
(c) $\norm{u_y}^{(n)}_p\leq\norm{f}^{(n)}_p$
(d) $\norm{u_y-f}^{(n)}_p\to 0$ as $y\to0^+$.
\end{theorem}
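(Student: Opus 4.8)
The plan is to reduce every part of the statement to the classical results of Theorem~\ref{theorempoissonLp} by means of the linear isometry $f\mapsto F$ between $\Lnp$ and $L^p$. The one observation that drives the whole proof is that, writing $U_y=\Phi_y\ast F$ for the ordinary Poisson integral of the primitive, the distribution $u_y$ is precisely $D^nU_y=U_y^{(n)}$, whose primitive in $L^p$ is $U_y$ itself. Everything rests on the fact that the Poisson kernel is smooth with $\Phi_y^{(k)}\in L^r$ for every $k\ge 0$ and every $1\le r\le\infty$ (indeed $\Phi_y^{(k)}(x)=O(|x|^{-2})$ as $|x|\to\infty$), so that the $\Lnp$ form of the convolution in Theorem~\ref{theoremconvinfty}(a) applies and derivatives may be shifted freely between the two factors.

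For (a) I would use the $\Lnp$ analogue of Theorem~\ref{theoremconvinfty}(a) with $\Phi_y$ as the multiplier: its $n$th derivative lies in $L^q$, and the polynomial of degree at most $n-1$ by which $\Phi_y$ differs from an honest $\Inq$ primitive is annihilated by the convention $\intinf fP=0$. Hence the integral in the statement equals the Lebesgue convolution $\Phi_y^{(n)}\ast F(x)$, a bounded continuous function by H\"older's inequality, so $u_y$ is well defined. Differentiating under the integral sign---legitimate by dominated convergence, the dominating function coming from $\Phi_y^{(n)}\in L^q$ locally uniformly in $x$---gives $\Phi_y^{(n)}\ast F=\partial_x^n(\Phi_y\ast F)=U_y^{(n)}$, so that $u_y=D^nU_y$ with primitive $U_y\in L^p$. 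For (b), set $U(x,y)=U_y(x)$, which is harmonic in $\Pi^+$ by Theorem~\ref{theorempoissonLp}(a); since the constant-coefficient operator $\partial_x^n$ commutes with the Laplacian, $u_y(x)=\partial_x^nU(x,y)$ is harmonic as well.

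Parts (c) and (d) are then formal consequences of the isometry. By uniqueness of primitives (Theorem~\ref{theoremunique} in its $\Lnp$ form) the primitive of $u_y=D^nU_y$ is $U_y$, so $\norm{u_y}^{(n)}_p=\norm{U_y}_p\le\norm{F}_p=\norm{f}^{(n)}_p$ by Theorem~\ref{theorempoissonLp}(b), which is (c). Similarly $u_y-f=D^n(U_y-F)$ has primitive $U_y-F$, hence $\norm{u_y-f}^{(n)}_p=\norm{U_y-F}_p\to 0$ as $y\to 0^+$ by Theorem~\ref{theorempoissonLp}(c), which is (d).

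The genuine work sits entirely in (a): identifying the distributional object $\Phi_y\ast f$ with both the pointwise integral in the statement and with $D^nU_y$. The two points I expect to need care are (i) transferring the $n$ derivatives onto $\Phi_y$ and out of the convolution, for which I would write out the dominated-convergence estimate based on $\norm{\Phi_y^{(n)}}_q<\infty$, and (ii) checking that the polynomial ambiguity in regarding $\Phi_y$ as an element of $\Inq$ leaves the integral unchanged, which follows from $\intinf fP=0$ for $\deg P\le n-1$. Once (a) is established, parts (b)--(d) follow mechanically from the $L^p$--$\Lnp$ isometry.
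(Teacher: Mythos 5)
Your proof is correct and follows essentially the same route as the paper's: identify $u_y$ with $\partial_x^n U_y$ via the decay of the Poisson kernel's derivatives (handling the degree-$(n-1)$ polynomial ambiguity in the multiplier and moving the derivative out of the integral by dominated convergence), then deduce (b) from commutation of $\partial_x^n$ with the Laplacian and (c), (d) from the isometry $\norm{D^n G}^{(n)}_p=\norm{G}_p$ together with Theorem~\ref{theorempoissonLp}. The only cosmetic difference is that you state the kernel estimate as $\Phi_y^{(k)}(x)=O(|x|^{-2})$ rather than the paper's sharper $O(x^{-k-2})$, which is still sufficient for every step.
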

\begin{proof}
Let $q$ be the conjugate of $p$.
(a) The Poisson kernel is real analytic in $\Pi^+$ so each derivative
is a continuous function.  Note that $\partial^m\Phi_y(x)/\partial x^m
=O(x^{-m-2})$ as $|x|\to\infty$.  Hence, each derivative of $\Phi_y$
is in $L^1$.  And, for each $y>0$ there is a polynomial, $p_y$, of
degree at most $n-1$ such that $\Phi_y+p_y\in I^{n,1}$. 
It follows from
Definition~\ref{convn1} that
\begin{eqnarray*}
u_y(x) & = &  \frac{(-1)^ny}{\pi}\intinf F(\xi)\frac{\partial^n\Phi_y(\xi-x)}{\partial \xi^n}
\,d\xi
  = \frac{y}{\pi} \intinf F(\xi)\frac{\partial^n\Phi_y(\xi-x)}{\partial x^n}
\,d\xi\\
 & = &  \frac{\partial^n U_y(x)}{\partial x^n}.
\end{eqnarray*}
The growth estimate $\partial^n\Phi_y(x)/\partial x^n
=O(x^{-m-2})$ as $|x|\to\infty$ and dominated convergence allows the
derivative to be moved 
outside the integral.  (b) Since the Laplacian commutes
with each derivative, we now see that $u_y$ is harmonic in $\Pi^+$.
(c) From (b) and Theorem~\ref{theorempoissonLp},
$\norm{u_y}^{(n)}_p=\norm{U_y}_p\leq\norm{F}_p=\norm{f}^{(n)}_p$.
(d) $\norm{u_y-f}^{(n)}_p=\norm{U_y-F}_p\to 0$ as $y\to0^+$.\end{proof}

The Poisson integral of distributions has been considered by
other authors in the spaces ${\mathcal D}'_{L^p}(\RR)$ (end of
Section~\ref{sectionnotation}) and weighted
versions of these spaces.  See \cite{alvarezguzman} and
\cite{cichockakierat} for results and
further references.  However, the boundary values are
then taken on in a weak sense, whereas here we have boundary values
taken on in the norm $\norm{\cdot}^{(n)}_p$.  A deeper study would
involve questions of uniqueness.  The condition on $F\in L^1$ in
Theorem~\ref{theorempoissonLnp} can be weakened to existence
of the integral $\intinf |F(x)|(x^2+1)^{-1}\,dx$.

\section{Integration in $\RR^n$}\label{sectionRn}
We briefly outline a method of extending the $\Lpp$
integrals to $\RR^n$.

Impose a Cartesian coordinate system on $\RR^n$ and write $x=(x_1,\dots, x_n)$.  
Let $1\leq p<\infty$
and let $q$ be its conjugate.
Let $F\in L^p(\RR^n)$.  Let $D=\partial^n/\partial x_1\partial x_2\cdots
\partial x_n$.  Define $\Lpp(\RR^n)=\{f\in\Sc'(\RR^n)\mid f=DF
\text{ for some } F\in L^p(\RR^n)\}$.
For the distributional derivative write $DF(x)=F_{12\cdots n}(x)$.
Let $G(x)=\int_0^{x_n}\cdots\int_0^{x_1}g(y_1)\,dy_1\cdots dy_n$ for
some $g\in L^q(\RR^n)$.  If $f=DF\in\Lpp(\RR^n)$ for unique $F\in L^p(\RR^n)$,
the integral is defined
with iterated Lebesgue integrals as
$\int_{\RR^n} f G=(-1)^n\intinf\cdots\intinf 
F(x)G_{12\cdots n}(x)\,dx_1\cdots dx_n$.  
The norm is $\norm{f}'_p=\norm{F}_p$ and $\Lpp(\RR^n)$ is isometrically
isomorphic to $L^p(\RR^n)$.  Most of the results of the
previous sections can now be extended to this integral.  This method
originates in work on integrals with continuous primitives by
Ang, Schmitt, Vy
\cite{ang} and Mikusi\'nksi, Ostaszewski \cite{pmikusinski}.
Similarly, if $\alpha_i\in\NN_0$ for $1\leq i\leq m$ and $\alpha
=(\alpha_1, \alpha_2, \ldots, \alpha_m)$ is a multi-index then define
the differential operator $\delta=\partial^{\alpha_1}/\partial x_1\cdots
\partial^{\alpha_m}/\partial x_m$.  Let $g\in L^q(\RR^n)$ and $G$ be an
iterated integral of $g$, integrated $\alpha_i$ times in the $i$th 
coordinate.  For $F\in L^p(\RR^n)$ and $f\in{\mathcal D}'(\RR^n)$ defined
by $f=\delta F$ the integral is 
$\int_{\RR^n} f G=(-1)^{\abs{\alpha}}\intinf\cdots\intinf 
F(x)\delta G(x)\,dx$.

\end{document}